\newcommand{\beq}{\begin{equation}}
\newcommand{\eeq}{\end{equation}}
\newcommand{\bea}{\begin{eqnarray}}
\newcommand{\eea}{\end{eqnarray}}
\newcommand{\beas}{\begin{eqnarray*}}
\newcommand{\eeas}{\end{eqnarray*}}
\definecolor{dg}{rgb}{0, 0.5, 0}
\newtheorem{theorem}{Theorem}[section]
\newtheorem{definition}[theorem]{Definition}
\newtheorem{proposition}[theorem]{Proposition}
\newtheorem{prop}[theorem]{Proposition}
\newtheorem{lemma}[theorem]{Lemma}
\newtheorem{remark}[theorem]{Remark}
\newtheorem{example}[theorem]{Example}
\newtheorem{examples}[theorem]{Examples}
\newtheorem{foo}[theorem]{Remarks}
\newenvironment{proof}{\addvspace{\medskipamount}\par\noindent{\it
Proof}.}
{\unskip\nobreak\hfill$\Box$\par\addvspace{\medskipamount}}
\newcommand{\bS}{\mathbb S}
\newcommand{\BS}{\bS^{4n+3}}
\newcommand{\R}{\mathbb R}
\newcommand{\cp}{\mathbb {CP}}
\title{The Subelliptic Heat Kernels of the Quaternionic Hopf Fibration}
\author{Fabrice Baudoin, Jing Wang }
\date{Department of Mathematics, Purdue University \\
 West Lafayette, IN, USA}
\begin{document}
\maketitle

\begin{abstract}
The main goal of this work is to study the sub-Laplacian of the unit sphere  which is obtained by lifting with respect to the Hopf fibration the Laplacian of the quaternionic projective space. We obtain in particular explicit formulas for its heat kernel and deduce an   expression for the Green function of the conformal sub-Laplacian and small-time asymptotics. As a byproduct of our study we also obtain several results related to the sub-Laplacian of a projected Hopf fibration.
\end{abstract}
\tableofcontents
\clearpage
\section{Introduction}

The first objective of this work is to study and give meaningful formulas for the heat kernel of the sub-Laplacian which is associated to the quaternionic Hopf fibration on $\mathbb{S}^{4n+3}$:
\[
\mathbf{SU}(2)\to\bS^{4n+3}\to\mathbb{HP}^n.
\] 
 This fibration originates from the natural action of  $\mathbf{SU}(2)$ on $\mathbb{S}^{4n+3}$, which identifies  $\mathbb{S}^{4n+3}$ as a $\mathbf{SU}(2)$ bundle over the projective quaternionic space $ \mathbb{HP}^n$. The sub-Laplacian we are interested in appears then as the lift on $\mathbb{S}^{4n+3}$ of the Laplace-Beltrami operator of $ \mathbb{HP}^n$. As we will see, it is intrinsically associated to the quaternionic contact geometry of $\mathbb{S}^{4n+3}$. 
 
 \
 
  Let us briefly describe our main results. One of the first observations is that, due to the symmetries of the above fibration, the heat kernel of the sub-Laplacian only depends on two variables: the variable $r$ which is the Riemannian distance on $\mathbb{HP}^n$ and the variable $\eta$ which is the Riemannian distance on the fiber $\mathbf{SU}(2)$.  We prove that in these coordinates, the cylindric part of the sub-Laplacian writes
 \[
\frac{\partial^2}{\partial r^2}+((4n-1)\cot r-3\tan r)\frac{\partial}{\partial r}+\tan^2r \left(\frac{\partial^2}{\partial \eta^2}+2\cot \eta\frac{\partial}{\partial \eta}\right).
\]
As a consequence of this expression for the sub-Laplacian, we are able to derive two expressions for the heat kernel: 
\begin{itemize}
\item[(1)]  A Minakshisundaram-Pleijel spectral expansion:
\begin{equation*}
p_t(r,\eta)=\sum_{m=0}^{+\infty}\sum_{k=0}^{\infty}\alpha_{k,m}e^{-4[k(k+2n+m+1)+nm]t}\frac{\sin (m+1)\eta}{\sin \eta}(\cos r)^mP_k^{2n-1,m+1}(\cos 2r)
\end{equation*}
where $\alpha_{k,m}=\frac{\Gamma(2n)}{2\pi^{2n+2}}(2k+m+2n+1)(m+1){k+m+2n\choose 2n-1}$ and $P_k^{2n-1,m+1}$ is a Jacobi polynomial. This formula is  useful  to study the long-time behavior of the heat kernel but seems difficult to use in the study of small-time asymptotics or for the purpose of proving upper and lower bounds. In order to derive small-time asymptotics of the kernel, we give another analytic expression for $p_t(r, \theta)$ which is much more geometrically meaningful. 
\item[(2)] An integral representation:
%For $t>0$, $r\in[0,\pi/2)$, $ \eta\in[-\frac{\pi}{2},\frac{\pi}{2}]$, 
\[
p_t(r, \eta)=\frac{e^{-t}}{\sqrt{\pi t}} \int_0^{+\infty} \frac{ \sinh y \sin \left(  \frac{\eta y}{2t}\right) }{\sin \eta} e^{-\frac{y^2-\eta^2}{4t}} q_t( \cos r\cosh y ) dy
\]
where $q_t$ is the heat kernel of the Riemannian structure of $\bS^{4n+3}$. We obtain this formula by employing a similar idea that was developed in the usual Hopf fibrations  (see \cite{BB}, \cite{CRS}, \cite{Bo}, \cite{W}). The key point is the commutation between the sub-Laplacian and the transversal directions in $\mathbf{SU}(2)$.
%This commutation implies that the subelliptic  heat semigroup $e^{tL}$ can be written $e^{-t T^2} e^{t \Delta}$, where $\Delta$ is the Laplace-Beltrami operator of the standard  Riemannian structure on $\bS^{2n+1}$. 
From this formula we are able to deduce the fundamental solution of the conformal sub-Laplacian $-L+4n(n+1)$. Furthermore, we also derive three different behaviors of the small-time asymptotics of the heat kernel: on the diagonal, on the vertical cut-locus, and outside of cut-locus. An interesting by-product of this small-time asymptotics we obtain, is a previously unknown explicit formula for the sub-Riemannian distance on the quaternionic unit sphere.
\end{itemize}

 \
 
 The second main objective of this work is the study of another sub-Laplacian that we now define.  The natural action of $\mathbb{S}^1$ on $\mathbb{S}^{4n+3}$ induces the classical Hopf fibration 
 \[
  \mathbb{S}^1 \to \bS^{4n+3} \to  \mathbb{CP}^{2n+1}
 \]
whose sub-Laplacian and corresponding heat kernel were studied in details in our previous work \cite{CRS}. Identifying $\mathbb{S}^1$ with a subgroup of $\mathbf{SU}(2)$, defines  a fibration
\[
\mathbf{SU}(2)  / \mathbb{S}^1 =  \mathbb{CP}^1 \to \mathbb{CP}^{2n+1} \to \mathbb{HP}^n
\]
that makes the following diagram commutative
\begin{diagram}\label{diag}
  & & \mathbb{S}^1 & & \\
  & \ldTo & \dTo & & \\
 \mathbf{SU}(2) & \rTo &\mathbb{S}^{4n+3} & \rTo & \mathbb{HP}^n \\
 \dTo & &\dTo & \ruTo & \\
 \mathbb{CP}^1 & \rTo & \mathbb{CP}^{2n+1} & & 
\end{diagram}
We are interested in the sub-Laplacian of the complex projective space $\mathbb{CP}^{2n+1}$ which is obtained by lifting the Laplace-Beltrami operator of $\mathbb{HP}^n$. Again, our main goal will be to provide meaningful formulas for the heat kernel of this sub-Laplacian. For the very same reasons as above,  the heat kernel $h_t$ of this sub-Laplacian only depends on two variables: $r$ which is again the Riemannian distance on $\mathbb{HP}^n$ and $\phi$ which is the Riemannian distance on $ \mathbb{CP}^1$. The expression of the cylindric part of the sub-Laplacian is then given by:
\[
\frac{\partial^2}{\partial r^2}+((4n-1)\cot r-3\tan r)\frac{\partial}{\partial r}+\tan^2r\left( \frac{\partial^2}{\partial \phi^2}+2\cot 2\phi\frac{\partial}{\partial \phi}\right)
\]
The Minakshisundaram-Pleijel  expansion of $h_t$  can then  be deduced in the same fashion as on $\bS^{4n+3}$, and by comparing it with the previous spectral decomposition of $p_t$, we prove the  following intertwining between the two kernels:
\[
h_t(r,\cos2\phi)=\frac{1}{2\pi}\int_0^\pi p_t(r,\cos\phi\cos\theta)d\theta.
\]
As a consequence we obtain in particular the small-time asymptotics of $h_t$.

\

To put our work in perspective, we mention that the study of subelliptic heat kernels on model spaces has generated quite a lot of interest in the past and is still nowadays an active domain of research which lies at the intersection of harmonic analysis, partial differential equations, control theory, differential geometry and probability theory (see the book \cite{CCFI} for an overview). One of the first studies goes back to Gaveau \cite{Ga} who provided an expression for the subelliptic  heat kernel on the simplest sub-Riemannian model space, the 3-dimensional Heisenberg group. The subelliptic heat kernel of the 3-dimensional Hopf fibration on $\mathbb{S}^3$ was first studied by Bauer \cite{bauer} and then, in more details by Baudoin and Bonnefont \cite{BB}. The subelliptic heat kernel of the 3-dimensional Hopf fibration on $\mathbf{SL}(2)$ was then studied by Bonnefont \cite{Bo}. A general study of heat kernels on any 3-dimensional contact manifold is then presented in \cite{BC}. The $n$-dimensional generalization of the work by Gaveau \cite{Ga} was made by Beals, Gaveau and Greiner in \cite{BGG}. The $n$-dimensional generalization of the work by Baudoin-Bonnefont \cite{BB} was made by the two present authors in \cite{CRS}. We also mention the work \cite{MM} and the work by Greiner \cite{Gr} who recovers results of \cite{CRS} by using the Hamiltonian method. The $n$-dimensional generalization of the work by Bonnefont \cite{Bo} was made by the second author of the present paper in \cite{W}. We also point out the work by Agrachev-Boscain and Gauthier \cite{A} that studies a general class of subelliptic heat kernels on Lie groups.

\

To conclude, we can observe that, up to an exotic example,  due to the work of Escobales \cite{Esc} the submersions $\mathbb{S}^{2n+1} \to \mathbb{CP}^n$ and $\mathbb{S}^{4n+3} \to \mathbb{HP}^n$ are  the only examples of Riemannian submersions of the sphere with totally geodesic fibers. As a consequence our work is a perfect complement of \cite{CRS} and completes hence  the study of all the natural subelliptic heat kernels of the unit sphere that come from a Riemannian submersion.

\

\section{The subelliptic heat kernel on $\mathbb{S}^{4n+3}$ }

The sub-Riemannian geometry of $\mathbb{S}^{4n+3}$ we are interested in, may be defined in at least two ways. The first one is to consider the quaternionic Hopf fibration
\[
\mathbf{SU}(2)\to\bS^{4n+3}\to\mathbb{HP}^n.
\]
Lifting the Laplace Beltrami operator of $\mathbb{HP}^n$ with respect to this submersion gives the sub-Laplacian of $\mathbb{S}^{4n+3}$ we want to study. This point of view gives a quick way to prove Proposition \ref{ger} below. 

The second way to study the sub-Riemannian geometry of $\mathbb{S}^{4n+3}$ is to see $\mathbb{S}^{4n+3}$ as a quaternionic contact manifold (more precisely a 3-Sasakian manifold). The sub-Laplacian on $\mathbb{S}^{4n+3}$ is then simply defined as the trace of the horizontal Hessian for the Biquard connection. For the sake of completeness we give a proof of  Proposition \ref{ger}  by using the quaternionic contact point of view, because it is natural to really see $\mathbb{S}^{4n+3}$ as the model space of a positively curved quaternionic contact manifold. 

The reader more interested in the analysis of the sub-Laplacian than in the geometry associated to it may skip Section 2.1. and admit Proposition \ref{ger}. Once Proposition \ref{ger} is admitted the remainder of the paper may be read without further references to quaternionic contact geometry.

\subsection{The quaternionic contact structures and the unit spheres}

%The quaternionic contact structure was introduced by Biquard as an quaternionic analogue of CR integrable structures. A quaternionic contact manifold $(\M, g, \Q)$
Quaternionic spheres appear as the model spaces of quaternionic contact manifolds and 3-Sasakian manifolds (see \cite{BG,IMV,IPV}) . We introduce them as follows: the quaternionic unit sphere is given by
\[
\bS^{4n+3}=\lbrace q=(q_1,\cdots,q_{n+1})\in \mathbb{H}^{n+1}, \| q \| =1\rbrace,
\]
where we denote  the quaternionic field by
\[
\mathbb{H} =\{ q=t+x I +y J +z K, (t,x,y,z) \in \mathbb{R}^4 \},
\]
where $I,J,K$ are the Pauli matrices:
\[
I=\left(
\begin{array}{ll}
i & 0 \\
0&-i 
\end{array}
\right), \quad 
J= \left(
\begin{array}{ll}
0 & 1 \\
-1 &0 
\end{array}
\right), \quad 
K= \left(
\begin{array}{ll}
0 & i \\
i &0 
\end{array}
\right).
\]
The quaternionic norm is 
\[
\| q \|^2 =t^2 +x^2+y^2+z^2.
\]

The sub-Riemannian structure of the quaternionic spheres comes from the quaternionic Hopf fibration:
\[
\mathbf{SU}(2)\to\bS^{4n+3}\to\mathbb{HP}^n,
\] 
 that we now describe. There is natural and isometric group action of the Lie group $\mathbf{SU}(2)$ on $\bS^{4n+3}$ which is given by, 
 $$ g \cdot (q_1,\cdots, q_{n+1}) = (g q_1,\cdots, g q_{n+1}). $$
 
 For any $f\in C^\infty(\bS^{4n+3})$, the infinitesimal generator of the left translation by $e^{I\theta}$ is given by
 \[
 \frac{d}{d\theta}f(e^{I\theta}q)\mid_{\theta=0}=\sum_{i=1}^{n+1}\frac{\partial f}{\partial t_i}\frac{Iq_i-\overline{q_i}I}{2}+\frac{\partial f}{\partial x_i}\frac{q_i+\overline{q_i}}{2}+\frac{\partial f}{\partial y_i}\frac{Kq_i-\overline{q_i}K}{2}+\frac{\partial f}{\partial z_i}\frac{\overline{q_i}J-Jq_i}{2},
 \]
 that is, 
 \[
 \frac{d}{d\theta}f(e^{I\theta}q)\mid_{\theta=0}=\sum_{i=1}^{n+1}\left(-x_i\frac{\partial f}{\partial t_i}+t_i\frac{\partial f}{\partial x_i}-z_i\frac{\partial f}{\partial y_i}+y_i\frac{\partial f}{\partial z_i}\right),
 \]
 where we use the real coordinates in $\mathbb{R}^{n+1}$, $(t_1, x_1,y_1,z_1, \cdots, t_{n+1},x_{n+1},y_{n+1},z_{n+1})$.
 Similarly, since
 \[
  \frac{d}{d\theta}f(e^{J\theta}q)\mid_{\theta=0}=\sum_{i=1}^{n+1}\frac{\partial f}{\partial t_i}\frac{Jq_i-\overline{q_i}J}{2}+\frac{\partial f}{\partial x_i}\frac{\overline{q_i}K-Kq_i}{2}+\frac{\partial f}{\partial y_i}\frac{q_i+\overline{q_i}}{2}+\frac{\partial f}{\partial z_i}\frac{Iq_i-\overline{q_i}I}{2},
 \]
 and
 \[
  \frac{d}{d\theta}f(e^{K\theta}q)\mid_{\theta=0}=\sum_{i=1}^{n+1}\frac{\partial f}{\partial t_i}\frac{Kq_i-\overline{q_i}K}{2}+\frac{\partial f}{\partial x_i}\frac{Jq_i-\overline{q_i}J}{2}+\frac{\partial f}{\partial y_i}\frac{\overline{q_i}I-Iq_i}{2}+\frac{\partial f}{\partial z_i}\frac{\overline{q_i}+q_i}{2}.
 \]
 we have that 
 \[
 \frac{d}{d\theta}f(e^{J\theta}q)\mid_{\theta=0}=\sum_{i=1}^{n+1}\left(-y_i\frac{\partial f}{\partial t_i}+z_i\frac{\partial f}{\partial x_i}+t_i\frac{\partial f}{\partial y_i}-x_i\frac{\partial f}{\partial z_i}\right).
 \]
 and
 \[
 \frac{d}{d\theta}f(e^{K\theta}q)\mid_{\theta=0}=\sum_{i=1}^{n+1}\left(-z_i\frac{\partial f}{\partial t_i}-y_i\frac{\partial f}{\partial x_i}+x_i\frac{\partial f}{\partial y_i}+t_i\frac{\partial f}{\partial z_i}\right).
 \]
 In quaternionic coordinates since  $\frac{\partial}{\partial{q_i}}= \frac{1}{2}\left(\frac{\partial}{\partial{t_i}}-I \frac{\partial}{\partial{x_i}}-J \frac{\partial}{\partial{y_i}}-K \frac{\partial}{\partial{z_i}}\right)$, let $T=\sum_{i=1}^{n+1}q_i \frac{\partial}{\partial{q_i}}-\frac{\partial}{\partial{\overline{q_i}}}\overline{q_i}$ and denote $T=IT_1+JT_2+KT_3$, where $T_1,T_2, T_3$ are real vector fields on $T(\bS^{4n+3})$. We then have that
 \[
 \frac{d}{d\theta}f(e^{I\theta}q)\mid_{\theta=0}=-(IT+TI)=2T_1,
 \]
  \[
 \frac{d}{d\theta}f(e^{J\theta}q)\mid_{\theta=0}=-(JT+TJ)=2T_2,
 \]
 and
  \[
 \frac{d}{d\theta}f(e^{K\theta}q)\mid_{\theta=0}=-(KT+TK)=2T_3.
 \]
They form a basis of the fibers of a $\mathbf{SU}(2)$- bundle structure on $\bS^{4n+3}$. 

We denote the one form  $dq_i=dt_i+Idx_i+Jdy_i+Kdz_i$. Simple computations show that for all $1\leq i,j\leq n+1$, $S=I,J,K$,
\[
dq_i(\frac{\partial}{\partial q_j})=d\overline{q_i}(\frac{\partial}{\partial \overline{q_j}})=2\delta_{ij}, \  dq_i(S\frac{\partial}{\partial q_j})=d\overline{q_i}(S\frac{\partial}{\partial \overline{q_j}})=0,
\]
and
\[
dq_i(\frac{\partial}{\partial \overline{q_j}})=d\overline{q_i}(\frac{\partial}{\partial {q_j}})=-\delta_{ij},\ 
dq_i(S\frac{\partial}{\partial \overline{q_j}})=d\overline{q_i}(S\frac{\partial}{\partial {q_j}})=S\delta_{ij}.
\]
Therefore, we have that for all $p\in\mathbb{H}$,
\[
dq_i(p\frac{\partial}{\partial q_j})=d\overline{q_i}(p\frac{\partial}{\partial \overline{q_j}})=\delta_{ij}(p+\overline{p}),\ dq_i(p\frac{\partial}{\partial \overline{q_j}})=d\overline{q_i}(p\frac{\partial}{\partial q_j})=-\delta_{ij}\overline{p}
\]

We choose the contact form $\eta=\frac{1}{2}\sum_{q=1}^{n+1}(dq_i)\overline{q_i}-q_id\overline{q_i}$. It has only imaginary part and we denote by $\eta=\eta_1I+\eta_2J+\eta_3K$ where $(\eta_1,\eta_2,\eta_3)$ are real contact forms. 

Let $T=\sum_{i=1}^{n+1}q_i \frac{\partial}{\partial{q_i}}-\frac{\partial}{\partial{\overline{q_i}}}\overline{q_i}$, then $\eta(T)=3$ and clearly $T=-\overline{T}$. If  we assume that $T=IT_1+JT_2+KT_3$, then
\[
T_1=-\frac{1}{2}(IT+TI),\ T_2=-\frac{1}{2}(JT+TJ),\ T_3=-\frac{1}{2}(KT+TK).
\] 
We can easily compute that 
\[
\eta(T_1)=-I,\ \eta(T_2)=-J,\ \eta(T_3)=-K,
\]
and
\[
\eta(ST)=-S, \quad S=I,J,K.
\]
%\[
%\eta (IT_1)=1, \ \eta(IT_2)=K,\ \eta(JT_3)=-K.
%\]
Therefore, the real contact forms $(\eta_1,\eta_2,\eta_3)$ given by
\[
\eta_1=\frac{1}{2}(I\eta+\eta I),\ \eta_2=\frac{1}{2}(J\eta+\eta J),\ \eta_3=\frac{1}{2}(K\eta+\eta K),
\]
satisfy that
\[
\eta_i(T_j)=\delta_{ij}.
\]
%The trivialization of this bundle provide three contact forms $(\eta_1,\eta_2,\eta_3)$ which define the  quaternionic contact structure of $\bS^{4n+3}$: 
%\[
%\eta_1=-\sum_{i=1}^{n+1}(dq_i)\overline{q_i}I, \ 
%\eta_2=-\sum_{i=1}^{n+1}(dq_i)\overline{q_i}J,
%\]
%and
%\[
%\eta_3=-\sum_{i=1}^{n+1}(dq_i)\overline{q_i}K.
%\]
The horizontal distribution $\mathbf{H}$ of $\bS^{4n+3}$ is given by the kernel of $\eta$ and $d\eta$ induces the quaternionic Hermitian structure $g_\eta$ on $\mathbf{H}$: for all $X,Y\in \mathbf{H}$,
\[
g_\eta(X, I_1Y)=\frac{1}{2}d\eta_1(X,Y),\ g_\eta(X, I_2Y)=\frac{1}{2}d\eta_2(X,Y),\ g_\eta(X, I_3Y)=\frac{1}{2}d\eta_3(X,Y).
\]
It is compatible with the Hermitian structure in the sense that
\[
g_\eta(I_k\cdot,I_k\cdot)=g_\eta(\cdot,\cdot), \quad k=1,2,3.
\]
Notice that $T(\bS^{4n+3})=\mathbf{H}\oplus \mathbf{span}\{T_1,T_2,T_3\}$, and we can obtain the semi-Riemannian metric $g$ on $\bS^{4n+3}$ by extending $g_\eta$ as follows:
\[
g(X,Y)=g_\eta(X,Y),\ g(X,T_i)=0,\ g(T_i, T_j)=\delta_{ij}
\]
for all $X,Y\in\mathbf{H}, 1\leq i,j\leq3$.
%\begin{remark}
%What worth notice is that the vector fields $T_1$, $T_2$, and $T_3$ are equivalent in the sense of $3$-contact structure. See \cite{3-contact}
%\end{remark}

\subsection{The sub-Laplacian on $\bS^{4n+3}$}
On a general contact quaternionic manifold whose vertical space is generated by three Reeb vector fields, there exists a canonical connection $\nabla$ which preserves the metric and the almost complex structure (see \cite{biquard}). It is called the Biquard connection. 
We now introduce the canonical sub-Laplacian on $\bS^{4n+3}$ as follows: for any $f\in C^2(\bS^{4n+3})$,
\begin{equation}\label{subL}
Lf=\mathbf{trace}_{g_\eta}(\pi_{\mathbf{H}}\nabla^2f)
\end{equation}
where $\nabla^2$ is the pseudo-Hermitian Hessian of $f$ with respect to the Biquard connection, and $\pi_{\mathbf{H}}B$ is the restriction of $B$ to $\mathbf{H}(\bS^{4n+3})\otimes\mathbf{H}(\BS)$ for any bilinear form $B$ on $T(\bS^{4n+3})$.\

To compute $L$ in local coordinates, we project the vector fields $\frac{\partial}{\partial t_i}$ onto $\mathbf{H}(\BS)$ and obtain  the horizontal vector fields: 
since $\frac{\partial}{\partial t_i}=\frac{\partial}{\partial q_i}+\frac{\partial}{\partial \overline{q_i}}$, and
\[
\eta(\frac{\partial}{\partial q_i})=\frac{2q_i+\overline{q_i}}{2}, \ \eta(\frac{\partial}{\partial \overline{q_i}})=-\frac{3}{2}q_i.
\]
\[
\eta({\frac{\partial}{\partial t_i}})=\frac{\overline{q_i}-q_i}{2}=-(x_iI+y_iJ+z_iK).
\]
Therefore
\[
X_{0,i}=\frac{\partial}{\partial t}+\frac{(x_iI+y_iJ+z_iK)}{3}T,\quad \mbox{for all }1\leq i\leq n+1.
\]
Similarly, we can obtain $X_{1,i}$, $X_{2,i}$, $X_{3,i}$ by projecting $\frac{\partial}{\partial x_i}$, $\frac{\partial}{\partial y_i}$, and $\frac{\partial}{\partial z_i}$ as follows:
since $\frac{\partial}{\partial x_i}=I\frac{\partial}{\partial q_i}-\frac{\partial}{\partial \overline{q_i}}I=\frac{\partial}{\partial q_i}I-I\frac{\partial}{\partial \overline{q_i}}$,
\[
\eta(\frac{\partial}{\partial x_i})=\frac{\overline{q_i}I-Iq_i+2q_iI}{2}=t_iI+z_iJ-y_iK.
\]
\[
X_{1,i}=\frac{\partial}{\partial x_i}-\frac{(t_iI+z_iJ-y_iK)}{3}T,
\]
Similarly, plug in $\frac{\partial}{\partial y_i}=J\frac{\partial}{\partial q_i}-\frac{\partial}{\partial \overline{q_i}}J=\frac{\partial}{\partial q_i}J-J\frac{\partial}{\partial \overline{q_i}}$ and $\frac{\partial}{\partial z_i}=K\frac{\partial}{\partial q_i}-\frac{\partial}{\partial \overline{q_i}}K=\frac{\partial}{\partial q_i}K-K\frac{\partial}{\partial \overline{q_i}}$,
\[
\eta(\frac{\partial}{\partial y_i})=\frac{\overline{q_i}J-Jq_i+2q_iJ}{2}=-z_iI+t_iJ+x_iK,
\]
and
\[
\eta(\frac{\partial}{\partial z_i})=\frac{\overline{q_i}K-Kq_i+2q_iK}{2}=y_iI-x_iJ+t_iK.
\]
Therefore we obtain
\[
X_{2,i}=\frac{\partial}{\partial y_i}-\frac{(-z_iI+t_iJ+x_iK)}{3}T, \ X_{3,i}=\frac{\partial}{\partial z_i}-\frac{(y_iI-x_iJ+t_iK)}{3}T.
\]
The sub-Laplacian is then given by
\[
L=\sum_{i=1}^{n+1}X_{0,i}^2+X_{1,i}^2+X_{2,i}^2+X_{3,i}^2,
\]
that is
\[
L=\sum_{i=1}^{n+1}\left(\frac{\partial^2}{\partial t_i^2}+\frac{\partial^2}{\partial x_i^2}+\frac{\partial^2}{\partial y_i^2}+\frac{\partial^2}{\partial z_i^2}\right)-T\overline{T}.
\]
It can be written  in quaternionic coordinates as follows:
\begin{equation}\label{subL}
L= 2\sum_{i=1}^{n+1}\left(\frac{\partial^2 }{\partial q_i\partial \overline{q_i}}+ \frac{\partial^2 }{\partial \overline{q_i}\partial q_i}\right)-(T_1^2+T_2^2+T_3^2).
\end{equation}

 It is hence seen that $L$ is essentially self-adjoint on $C^\infty(\bS^{4n+3})$ with respect to the volume measure and related to the Laplace-Beltrami operator $\Delta$ of the standard Riemannian structure on $\bS^{4n+3}$ by the formula:
\[
L=\Delta -T_1^2-T_2^2-T_3^2,
\]
where $\Delta=\sum_{i=1}^{n+1}\left(\frac{\partial^2}{\partial t_i^2}+\frac{\partial^2}{\partial x_i^2}+\frac{\partial^2}{\partial y_i^2}+\frac{\partial^2}{\partial z_i^2}\right)$ is the Laplacian associated to the Riemannian structure of $\bS^{4n+3}$.

Moreover, we can observe, a fact which will be important for us, that $L$ and $T_i$ commute, that is, on smooth functions $T_i L=LT_i$.

To study $L$ we now introduce a new set of coordinates that reflect the symmetries of the quaternionic contact structure.  Keeping in mind the  submersion $\mathbb{S}^{4n+3} \rightarrow \mathbb{HP}^n$, we let $(w_1,\cdots, w_n,\theta_1,\theta_2,\theta_3)$ be local coordinates for $\bS^{4n+3}$, where $(w_1,\cdots,w_n)$ are the local inhomogeneous coordinates for $\mathbb{HP}^n$ given by $w_j=q_jq_{n+1}^{-1}$
%$w_j=q_{n+1}^{-1} q_j$
, and $\theta_1,\theta_2,\theta_3$ are local coordinates on the $\mathbf{SU}(2)$ fiber. More explicitly, these coordinates are given by
\begin{align}\label{cylinder}
(w_1,\cdots,w_n,\theta_1,\theta_2,\theta_3 )\longrightarrow \left(\frac{w_ne^{ I\theta_1 +J\theta_2 +K\theta_3} }{\sqrt{1+\rho^2}},\cdots,\frac{w_ne^{ I\theta_1 +J\theta_2 +K\theta_3}}{\sqrt{1+\rho^2}},\frac{e^{ I\theta_1 +J\theta_2 +K\theta_3}}{\sqrt{1+\rho^2}} \right),
\end{align}
where $\rho=\sqrt{\sum_{j=1}^{n}|w_j|^2}$, $\theta_i \in \R/2\pi\mathbb{Z}$, and $w \in \mathbb{HP}^n$.\
Therefore by considering the diffeomorphism 
\[
(w_1,\cdots,w_n,\theta_1,\theta_2,\theta_3,\kappa )\longrightarrow \left(\frac{w_1\kappa e^{ I\theta_1 +J\theta_2 +K\theta_3} }{\sqrt{1+\rho^2}},\cdots,\frac{\kappa w_ne^{ I\theta_1 +J\theta_2 +K\theta_3}}{\sqrt{1+\rho^2}},\frac{\kappa e^{ I\theta_1 +J\theta_2 +K\theta_3}}{\sqrt{1+\rho^2}} \right)
\]
and restrict to 
\[
\kappa=1,\quad \frac{\partial}{\partial\kappa}=0,
\]
we have  that on $\bS^{4n+3}$, for all $1\leq j\leq n$:
%%%%%%%%%%%%%%%%%%%%%%%%%%%%%%%%%%%%%%%%%%%%%%%%%
%\begin{eqnarray*}
%\frac{\partial}{\partial q_j} &=& \frac{\partial}{\partial w_j}q_{n+1}^{-1}=\sqrt{1+\rho^2}\frac{\partial}{\partial w_j}e^{-(I\theta_1+J\theta_2+K\theta_3)}\\
%\frac{\partial}{\partial \overline{q_j}} &=& \overline{q_{n+1}}^{-1}\frac{\partial}{\partial \overline{w_j}}=\sqrt{1+\rho^2}e^{I\theta_1+J\theta_2+K\theta_3}\frac{\partial}{\partial \overline{w_j}},
%\end{eqnarray*}
%
%and from the expression of $T$ we can deduce that
%\[
%\frac{\partial}{\partial q_{n+1}} = -q_{n+1}^{-1}\sum_{i=1}^nw_i\frac{\partial}{\partial w_i}+q_{n+1}^{-1}T
%\]
%%%%%%%%%%%%%%%%%%%%%%%%%%%%%%%%%%%%%%%%%%%%%%%%%
\begin{eqnarray*}
\frac{\partial}{\partial q_j} &=& q_{n+1}^{-1}\frac{\partial}{\partial w_j}=\sqrt{1+\rho^2}e^{-(I\theta_1+J\theta_2+K\theta_3)}\frac{\partial}{\partial w_j}\\
\frac{\partial}{\partial \overline{q_j}} &=& \frac{\partial}{\partial \overline{w_j}}\overline{q_{n+1}}^{-1}=\sqrt{1+\rho^2}\frac{\partial}{\partial \overline{w_j}}e^{I\theta_1+J\theta_2+K\theta_3}.
\end{eqnarray*}
Moreover, obviously we have that
\[
\frac{\partial}{\partial q_{n+1}} = -q_{n+1}^{-1}\sum_{i=1}^nw_i\frac{\partial}{\partial w_i}+q_{n+1}^{-1}(\sum_{i=1}^{n+1}q_i\frac{\partial}{\partial q_i}),
\]
and when restricted to $\bS^{4n+2}$, we have $\sum_{i=1}^{n+1}q_i\frac{\partial}{\partial q_i}=\frac{1}{2}(IT_1+JT_2+KT_3)$.
Thus
\begin{eqnarray*}
\frac{\partial}{\partial q_{n+1}} &=& -q_{n+1}^{-1}\sum_{i=1}^nw_i\frac{\partial}{\partial w_i}-\frac{1}{2}q_{n+1}^{-1}(IT_1+ JT_2+KT_3)\\
\frac{\partial}{\partial \overline{q_{n+1}}} &=&- \sum_{i=1}^n\frac{\partial}{\partial \overline{w_i}}\overline{w_i}\ (\overline{q_{n+1}})^{-1}+\frac{1}{2}\left(I(\overline{q_{n+1}})^{-1}T_1+ J(\overline{q_{n+1}})^{-1}T_2+K(\overline{q_{n+1}})^{-1}T_3\right)
\end{eqnarray*}

Plug into \eqref{subL} we obtain that
\begin{eqnarray*}
L&=& 2(1+\rho^2)\bigg[\sum_{i=1}^{n+1}\left(\frac{\partial^2}{\partial w_j\partial \overline{w_j}}+\frac{\partial^2}{\partial  \overline{w_j}\partial w_j}\right)
+\left(\mathcal{R}\overline{\mathcal{R}}+\overline{\mathcal{R}} \mathcal{R}\right)\\
&+&(\overline{\mathcal{R}}I-I\mathcal{R})T_1+ (\overline{\mathcal{R}}J-J\mathcal{R})T_2+(\overline{\mathcal{R}}K-K\mathcal{R})T_3)\bigg]
+\rho^2(T_1^2+T_2^2+T_3^2).
\end{eqnarray*}
where $\mathcal{R}=w_i\frac{\partial}{\partial w_i}$.

It is obvious that the subelliptic heat kernel is cylindric symmetric, i.e. it only depends on two coordinates $(r,\eta)$ where $r$ and $\eta$ are the Riemannian distance from the north pole on $\mathbb{HP}^n$ and $\mathbf{SU}(2)$ respectively. Therefore we just need to write the cylindrical part of $L$, denote as $\tilde{L}$. We define it rigorously as follows:
\begin{definition} Let us denote by $\psi$ the map from $\bS^{4n+3} $ to $  \mathbb{R}_{\ge 0} \times \R/2\pi\mathbb{Z} $ such that 
\[
\psi  \left(\frac{w_1e^{ I\theta_1 +J\theta_2 +K\theta_3} }{\sqrt{1+\rho^2}},\cdots,\frac{w_ne^{ I\theta_1 +J\theta_2 +K\theta_3}}{\sqrt{1+\rho^2}},\frac{e^{ I\theta_1 +J\theta_2 +K\theta_3}}{\sqrt{1+\rho^2}} \right)=\left(\rho, \eta \right),
\]
where $\rho=\sqrt{\sum_{j=1}^{n}|w_j|^2}$ and $\eta$ is the Riemannian distance from the identity on $\mathbf{SU}(2)$. We denote by $\mathcal{D}$ the space of smooth and compactly supported functions on $  \mathbb{R}_{\ge 0} \times \R/2\pi\mathbb{Z} $. Then the cylindrical part of $L$ is defined by $\tilde{L}:\mathcal{D}\to C^\infty( \bS^{4n+3} )$ such that for every  $f \in \mathcal{D}$, we have
\[
L(f \circ \psi)=(\tilde{L} f) \circ \psi.
\]
\end{definition}

We now compute the sub-Laplacian in cylindric coordinates.
\begin{proposition}\label{ger}
The cylindric part of the sub-Laplacian on $\bS^{4n+3}$ is given in the coordinates $(r,\eta)$ by
\begin{equation}\label{radial-L}
\tilde{L}=\frac{\partial^2}{\partial r^2}+((4n-1)\cot r-3\tan r)\frac{\partial}{\partial r}+\tan^2r \left(\frac{\partial^2}{\partial \eta^2}+2\cot \eta\frac{\partial}{\partial \eta}\right).
\end{equation}
\end{proposition}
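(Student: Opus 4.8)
The plan is to exploit the two submersions $\bS^{4n+3}\to\mathbb{HP}^n$ and $\mathbf{SU}(2)\to\bS^{4n+3}$ (the latter given by the fiber action) together with the already-established facts that $L=\Delta-T_1^2-T_2^2-T_3^2$ and that $L$ commutes with each $T_i$. First I would handle the radial-in-$r$ part. Since $r$ is the Riemannian distance on $\mathbb{HP}^n$ pulled back to $\bS^{4n+3}$, and $L$ is the horizontal lift of the Laplace--Beltrami operator $\Delta_{\mathbb{HP}^n}$, any function of $r$ alone is sent by $L$ to the radial part of $\Delta_{\mathbb{HP}^n}$ applied to that function. The radial Laplacian on $\mathbb{HP}^n$ (of real dimension $4n$, viewed with the standard metric normalized so that the sphere $\bS^{4n+3}$ has the induced metric) is classically
\[
\frac{\partial^2}{\partial r^2}+\bigl((4n-1)\cot r-3\tan r\bigr)\frac{\partial}{\partial r},
\]
the coefficient coming from the volume density $\sin^{4n-1}r\,\cos^3 r$ of geodesic spheres in $\mathbb{HP}^n$. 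This immediately accounts for the first two terms of \eqref{radial-L}, so the only real work is to identify the $\eta$-dependent piece.

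For the $\eta$-part, I would use the fact that $\eta$ is the Riemannian distance on the fiber $\mathbf{SU}(2)\cong\bS^3$, so that a function of $\eta$ alone is a radial (bi-invariant) function on the fiber group. The operator $T_1^2+T_2^2+T_3^2$ restricts on each fiber to (a multiple of) the Casimir / Laplace--Beltrami operator of $\mathbf{SU}(2)$, whose radial part in the distance variable $\eta$ on $\bS^3$ is $\frac{\partial^2}{\partial\eta^2}+2\cot\eta\,\frac{\partial}{\partial\eta}$. The subtle point is the coefficient $\tan^2 r$ in front: it arises because the fibers are not unit spheres but spheres of radius depending on the base point. Concretely, in the coordinates \eqref{cylinder} the fiber through a point at distance $r$ from the north pole is scaled, and one reads off from the metric $g$ (equivalently, from the expression for $L$ in the $(w,\theta)$-coordinates derived just above the Definition, with $\rho=\tan r$) that the vertical part of $L$ contributes exactly $\rho^2(T_1^2+T_2^2+T_3^2)=\tan^2 r\,(T_1^2+T_2^2+T_3^2)$ on cylindric functions. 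Combining this with the identification of $T_1^2+T_2^2+T_3^2$ with the fiber Laplacian, and noting that the mixed terms $(\overline{\mathcal R}S-S\mathcal R)T_i$ annihilate cylindric functions (they differentiate in directions transverse to the level sets of $(r,\eta)$, and by the $\mathbf{SU}(2)$- and $\mathbf{Sp}(n)$-symmetry of the configuration these contributions vanish on functions of $(r,\eta)$ only), yields the stated formula.

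In practice I would carry this out in the following order: (i) set $\rho=\tan r$ and verify from the change of variables that the Jacobian/volume element on $\bS^{4n+3}$ in the coordinates $(r,\eta,\text{angles})$ is proportional to $\sin^{4n-1}r\,\cos^3 r\,\sin^2\eta$, which pins down both first-order coefficients by the self-adjointness of $\tilde L$; (ii) check that $\tilde L$ must have the form $A(r,\eta)\partial_r^2+B\,\partial_\eta^2+(\text{lower order})$ with no $\partial_r\partial_\eta$ cross term (by the orthogonality of the horizontal base directions and the vertical fiber directions in $g$), and that $A\equiv 1$ since the horizontal distribution projects isometrically to $\mathbb{HP}^n$ where $r$ is a unit-speed parameter; (iii) compute $B$ by evaluating $L$ on a test function depending only on $\eta$, e.g.\ tracking how $T_1^2+T_2^2+T_3^2$ scales under the dilation built into \eqref{cylinder}, giving $B=\tan^2 r$; (iv) assemble the pieces. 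The main obstacle I anticipate is step (iii) — carefully bookkeeping the scaling of the vertical vector fields $T_i$ under the parametrization \eqref{cylinder}, i.e.\ showing cleanly that the fiber metric seen by the sub-Laplacian at radius $r$ is $\tan^2 r$ times the round metric on $\bS^3$, and confirming that the mixed first-order terms genuinely drop out on cylindric functions rather than merely looking like they should.
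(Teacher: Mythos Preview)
Your proposal is correct and lands in the same place as the paper, but the route is organized differently. The paper's proof is a direct computation from the coordinate expression for $L$ in the $(w,\theta)$-variables displayed just before the Definition: it evaluates $\partial_{w_i}$, $\mathcal R$, $\overline{\mathcal R}$ on $f\circ\psi$ explicitly, checks that the mixed terms $(\overline{\mathcal R}S-S\mathcal R)T_i(f\circ\psi)$ vanish because $\mathcal R(f\circ\psi)=\overline{\mathcal R}(f\circ\psi)=\tfrac{\rho}{2}\partial_\rho f\circ\psi$ is real, identifies $T_1^2+T_2^2+T_3^2$ with the radial Laplacian on $\mathbf{SU}(2)$, and then sets $\rho=\tan r$. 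Your two shortcuts --- getting the $r$-part for free from the fact that $L$ is the horizontal lift of $\Delta_{\mathbb{HP}^n}$, and pinning down the first-order coefficients by self-adjointness against the cylindric measure $\sin^{4n-1}r\,\cos^3 r\,\sin^2\eta\,dr\,d\eta$ --- are genuine economies over the paper's term-by-term calculation, and your step (iii) (test $L$ on a function of $\eta$ alone, where all $w_i$- and $\mathcal R$-derivatives vanish, leaving $\rho^2(T_1^2+T_2^2+T_3^2)$) is exactly how one isolates $B=\tan^2 r$ cleanly. One correction to your heuristics: the fibers of the quaternionic Hopf fibration are totally geodesic great $3$-spheres of \emph{fixed} radius, so the $\tan^2 r$ is not a fiber-scaling effect; it is simply the coefficient $\rho^2$ sitting in the coordinate formula (equivalently, $\sec^2 r-1$ from $\tilde\Delta_{\bS^{4n+3}}-\tilde\Delta_{\mathbf{SU}(2)}$). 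Since you ultimately read it off from that formula anyway, the argument is unaffected, but the geometric picture you offer for it should be dropped.
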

\begin{proof}
Noticing that
\[
\frac{\partial}{\partial w_i}( f\circ \psi)=\frac{1}{2\rho} \left( \frac{\partial f }{\partial \rho} \circ \psi \right)\overline{w_i},\quad
\frac{\partial}{\partial \overline{w_i}}( f\circ \psi)=\frac{1}{2\rho} \left( \frac{\partial f }{\partial \rho} \circ \psi \right) w_i,
\]
and $\frac{\partial}{\partial w_i}w_i=\frac{1}{2}\left(\frac{\partial}{\partial t_i^w}-I\frac{\partial}{\partial x_i^w}-J\frac{\partial}{\partial y_i^w}-K\frac{\partial}{\partial z_i^w}\right)(t_i^w+Ix_i^w+Jy_i^w+Kz_i^w)=2$, 
we can compute that 
\[
\sum_{i=1}^{n+1}\frac{\partial^2}{\partial w_j\partial \overline{w_j}}(f \circ \psi)=\sum_{i=1}^{n+1}\frac{\partial^2}{\partial \overline{w_j}\partial w_j}(f \circ \psi)=\left(\frac{1}{4}\frac{\partial^2f}{\partial \rho^2}+\frac{4n-1}{4\rho}\frac{\partial f}{\partial \rho} \right) \circ \psi.
\]
Moreover,
\[
\mathcal{R}(f \circ \psi)=\overline{\mathcal{R}}( f \circ \psi) =\frac{\rho}{2}\frac{\partial f }{\partial \rho} \circ \psi,
\]
and we have that
\[
(\overline{\mathcal{R}}I-I\mathcal{R})T_1( f \circ \psi) =0.
\]
By the same reason we have that  $(\overline{\mathcal{R}}J-J\mathcal{R})T_2(f\circ \psi) $ and $(\overline{\mathcal{R}}K-K\mathcal{R})T_3(f \circ \psi)$ vanish as well. This yields,
\[
\mathcal{R}\overline{\mathcal{R}}(f \circ \psi)=\overline{\mathcal{R}}\mathcal{R}f= \left( \frac{3\rho}{4}\frac{\partial f}{\partial \rho}+\frac{\rho^2}{4}\frac{\partial^2f}{\partial \rho^2}\right) \circ \psi.
\]
Hence we obtain after simplifications
\[
L(f \circ \psi)=\left( (1+\rho^2)^2\frac{\partial^2 f}{\partial \rho^2} +\left(\frac{(4n-1)(1+\rho^2)}{\rho}+3(1+\rho^2)\right)\frac{\partial f}{\partial \rho} \right) \circ \psi
+\rho^2(T_1^2+T_2^2+T_3^2)(f\circ \psi).
\]

Notice that $T_1,T_2,T_3$  is the canonical basis on $\mathbf{SU}(2)$, hence $T_1^2+T_2^2+T_3^2$ is the Laplacian on $\mathbf{SU}(2)$, and 
\[
(T_1^2+T_2^2+T_3^2)(f \circ \psi) =\left(\frac{\partial^2 f}{\partial \eta^2}+2\cot \eta\frac{\partial f}{\partial \eta}\right)\circ \psi.
\]
We then obtain the desired expression by observing that $\rho=\tan r$.
\end{proof}

As a consequence of the previous result, it is an easy exercise to check that the Riemannian measure of $\bS^{4n+3} $ is given in cylindric coordinates $(r,\eta)$ by
\[
\frac{8\pi^{2n+1}}{\Gamma(2n)}(\sin r)^{4n-1}(\cos r)^3(\sin\eta)^2drd\eta.
\]

\begin{remark}
From \eqref{radial-L} one can easily observe  that the cylindric  sub-Laplacian on $\BS$ can thus  be obtained by lifting the radial sub-Laplacian $\mathbb{HP}^n$, more precisely
\[
\tilde{L}= \tilde{\Delta}_{\mathbb{HP}^n}+\tan ^2 r \tilde{\Delta}_{\mathbf{SU}(2)},
\]
where $\tilde{\Delta}_{\mathbb{HP}^n}$ is the radial part of the Laplacian on $\mathbb{HP}^n$ and $\tilde{\Delta}_{\mathbf{SU}(2)}$ is the radial part of the Laplacian on $\mathbf{SU}(2)$.
\end{remark}

\subsection{Spectral decomposition of the heat kernel}\label{spectral}

In this section, we derive the spectral decomposition of the subelliptic heat kernel of the heat semigroup $P_t=e^{tL}$ issued from the north pole. Notice that due to the cylindric symmetry, the heat kernel that we denote $p_t(r,\eta)$ will only depend on the coordinates $(r,\eta)$. We now prove the following Minakshisundaram-Pleijel expansion theorem:
\begin{proposition}
For $t>0$, $r\in[0,\frac{\pi}{2})$, $ \eta\in[0,\pi]$, the subelliptic kernel is given by
\begin{equation}\label{pt}
p_t(r,\eta)=\sum_{m=0}^{+\infty}\sum_{k=0}^{\infty}\alpha_{k,m}e^{-4[k(k+2n+m+1)+nm]t}\frac{\sin (m+1)\eta}{\sin \eta}(\cos r)^mP_k^{2n-1,m+1}(\cos 2r)
\end{equation}
where $\alpha_{k,m}=\frac{\Gamma(2n)}{2\pi^{2n+2}}(2k+m+2n+1)(m+1){k+m+2n\choose 2n-1}$ and 
\[
P_k^{2n-1,m+1}(x)=\frac{(-1)^k}{2^kk!(1-x)^{2n-1}(1+x)^{m+1}}\frac{d^k}{dx^k}\left((1-x)^{2n-1+k}(1+x)^{m+1+k} \right).
\] 
is a Jacobi polynomial.
\end{proposition}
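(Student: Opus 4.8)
\emph{Proof proposal.} The plan is to diagonalise $\tilde L$ on $L^2(\mu)$, where $\mu=\psi_*(\text{vol}_{\BS})$, so that --- by the volume formula recorded just after Proposition~\ref{ger} --- $d\mu=\frac{8\pi^{2n+1}}{\Gamma(2n)}(\sin r)^{4n-1}(\cos r)^3(\sin\eta)^2\,dr\,d\eta$. Since $L$ is essentially self-adjoint and $\tilde L$ is its restriction to the $P_t$-invariant subspace of cylindric functions, $\tilde L$ is itself essentially self-adjoint with discrete spectrum, and the kernel issued from the north pole $N$ --- which in the coordinates of \eqref{cylinder} is the point $(r,\eta)=(0,0)$ --- is $p_t=\sum_\ell e^{-\lambda_\ell t}\Phi_\ell(N)\Phi_\ell$ for any orthonormal eigenbasis $\{\Phi_\ell\}$ of $L^2(\mu)$, $-\tilde L\Phi_\ell=\lambda_\ell\Phi_\ell$. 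Everything thus reduces to producing that eigenbasis and evaluating the coefficients $\Phi_\ell(N)$.

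First I would separate variables, $g(r)h(\eta)$. By \eqref{radial-L} the factor $h$ must satisfy $h''+2\cot\eta\,h'=-\nu h$, the radial Laplacian of $\mathbf{SU}(2)\cong\bS^3$; the solutions bounded at $\eta=0$ and $\eta=\pi$ are $h_m(\eta)=\frac{\sin(m+1)\eta}{\sin\eta}$ with $\nu=m(m+2)$, $m\ge0$. For $h=h_m$ the factor $g$ solves
\[
g''+\bigl((4n-1)\cot r-3\tan r\bigr)g'-m(m+2)\tan^2r\,g=-\lambda g,
\]
and the substitution $g=(\cos r)^m\phi$ followed by $x=\cos 2r$ turns this into the Jacobi equation with parameters $(2n-1,m+1)$. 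The indicial exponents at the singular endpoints $r=0$ and $r=\pi/2$ are $\{0,2-4n\}$ and $\{m,-m-2\}$, so the solution regular at both --- hence lying in the domain of $L$ --- is $\phi=P_k^{2n-1,m+1}(\cos2r)$, which forces $\lambda=\lambda_{k,m}:=4[k(k+2n+m+1)+nm]$, the factor $4$ coming from $x=\cos2r$ and the term $4nm$ from the $(\cos r)^m$ conjugation. This produces the candidate eigenfunctions $\Psi_{k,m}(r,\eta)=\frac{\sin(m+1)\eta}{\sin\eta}(\cos r)^mP_k^{2n-1,m+1}(\cos2r)$.

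Next I would verify completeness and orthogonality of $\{\Psi_{k,m}\}_{k,m\ge0}$ in $L^2(\mu)$: for fixed $r$, $\{h_m\}$ spans $L^2((0,\pi),\sin^2\eta\,d\eta)$ since $h_m(\eta)\sin\eta=\sin(m+1)\eta$ and $\{\sin(m+1)\eta\}$ is the usual sine basis; for fixed $m$, after $x=\cos2r$ the $P_k^{2n-1,m+1}$ span $L^2((-1,1),(1-x)^{2n-1}(1+x)^{m+1}dx)$; orthogonality is automatic from self-adjointness. Finally $\alpha_{k,m}=\Psi_{k,m}(0,0)/\|\Psi_{k,m}\|^2_{L^2(\mu)}$, and one evaluates this using $\Psi_{k,m}(0,0)=(m+1)P_k^{2n-1,m+1}(1)=(m+1)\binom{k+2n-1}{k}$, the elementary $\int_0^\pi\sin^2(m+1)\eta\,d\eta=\pi/2$, the change of variable $x=\cos2r$ together with the explicit volume constant above, and the classical Jacobi normalisation $\int_{-1}^{1}(1-x)^\alpha(1+x)^\beta[P_k^{\alpha,\beta}(x)]^2dx=\frac{2^{\alpha+\beta+1}}{2k+\alpha+\beta+1}\frac{\Gamma(k+\alpha+1)\Gamma(k+\beta+1)}{\Gamma(k+\alpha+\beta+1)k!}$ with $(\alpha,\beta)=(2n-1,m+1)$. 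Collecting these and simplifying yields exactly $\alpha_{k,m}=\frac{\Gamma(2n)}{2\pi^{2n+2}}(2k+m+2n+1)(m+1)\binom{k+m+2n}{2n-1}$, and inserting everything into the spectral sum gives \eqref{pt}; as a sanity check the $k=m=0$ term is the constant $\alpha_{0,0}=(2n+1)!/(2\pi^{2n+2})=1/\mathrm{vol}(\BS)$, as it must be.

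The step I expect to be the main obstacle is justifying that the formally separated solutions are honest smooth eigenfunctions of $L$ on $\BS$, not merely solutions of the radial operator: this smoothness requirement is precisely what picks out the regular indicial exponent at each of the four singular points $r=0,\pi/2$ and $\eta=0,\pi$, hence pins down the discrete spectrum $\{\lambda_{k,m}\}$ and makes the expansion converge to $p_t$; one must also check that $\tilde L$ with this automatic boundary behaviour coincides with the restriction of the self-adjoint $L$, so that no extraneous boundary condition slips in. Once that is in place the reduction to Jacobi form and the evaluation of $\alpha_{k,m}$ are routine special-function computations.
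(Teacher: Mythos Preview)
Your proposal is correct and follows essentially the same route as the paper: separate variables using the $\mathbf{SU}(2)$ eigenfunctions $h_m(\eta)=\frac{\sin(m+1)\eta}{\sin\eta}$, conjugate by $(\cos r)^m$, change variables $x=\cos 2r$ to reach the Jacobi equation with parameters $(2n-1,m+1)$, and then fix the normalising constants via the Jacobi orthogonality relation and the explicit cylindric measure. The only cosmetic difference is that the paper phrases the last step as matching the initial condition $\lim_{t\to0}\int p_t f\,d\mu=f(0,0)$ against a generic expansion of $f$, whereas you invoke the spectral formula $\alpha_{k,m}=\Psi_{k,m}(0,0)/\|\Psi_{k,m}\|_{L^2(\mu)}^2$ directly; these are of course equivalent, and your added remarks on indicial exponents and completeness make explicit what the paper leaves implicit.
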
 
\begin{proof}
The idea is to   expand the subelliptic kernel in spherical harmonics as follows, 
\[
p_t(r,\eta)=\sum_{m=0}^{+\infty}\frac{\sin (m+1)\eta}{\sin \eta}\phi_m(t,r)
\]
where $\frac{\sin (m+1)\eta}{\sin \eta}$ is the eigenfunction of $\tilde{\Delta}_{SU(2)}=\frac{\partial^2}{\partial \eta^2}+2\cot \eta\frac{\partial}{\partial \eta}$ which is associated to the eigenvalue $-m(m+2)$. To determine $\phi_m$, we use  $\frac{\partial p_t}{\partial t}=\tilde{L}p_t$ and find that
\[
\frac{\partial\phi_m}{\partial t}=\frac{\partial^2\phi_m}{\partial r^2}+\left((4n-1)\cot r-3\tan r \right)\frac{\partial\phi_m}{\partial r}-m(m+2)\tan^2r\phi_m.
\]
Let $\phi_m(t,r)=e^{-4nmt}(\cos r)^m\varphi_m(t,r)$, then $\varphi_m(t,r)$ satisfies the equation
\[
\frac{\partial\varphi _m}{\partial t}=\frac{\partial^2\varphi_m}{\partial r^2}+[(4n-1)\cot r-(2m+3)\tan r]\frac{\partial\varphi_m}{\partial r}.
\]
We now change the variable and denote by $\varphi_m(t,r)=g_m(t,\cos 2r)$, then we have that $g_m(t,x)$ satisfies the equation
\[
\frac{\partial g_m}{\partial t}=4(1-x^2)\frac{\partial^2 g_m}{\partial x^2}+4[(m+2-2n)-(2n+m+2)x]\frac{\partial g_m}{\partial x}.
\]
We denote $\Psi_m=(1-x^2)\frac{\partial^2 }{\partial x^2}+[(m+2-2n)-(2n+m+2)x]\frac{\partial}{\partial x}$, and find that
\[
\frac{\partial g_m}{\partial t}=4\Psi_m(g_m).
\]
The equation $$\Psi_m(g_m)+k(k+2n+m+1)g_m=0$$ is a Jacobi differential equation for all $k\geq 0$.  We denote the eigenvector of $\Psi_m$ corresponding to the eigenvalue $-k(k+2n+m+1)$ by $P_k^{2n-1,m+1}(x)$, then it is known that 
\[
P_k^{2n-1,m+1}(x)=\frac{(-1)^k}{2^kk!(1-x)^{2n-1}(1+x)^{m+1}}\frac{d^k}{dx^k}\left((1-x)^{2n-1+k}(1+x)^{m+1+k} \right).
\] 
At the end we can therefore write the spectral decomposition as
\[
p_t(r,\eta)=\sum_{m=0}^{+\infty}\sum_{k=0}^{\infty}\alpha_{k,m}e^{-4[k(k+2n+m+1)+nm]t}\frac{\sin (m+1)\eta}{\sin \eta}(\cos r)^mP_k^{2n-1,m+1}(\cos 2r)
\]
where $\alpha_{k,m}$ are  determined by considering the initial condition.

Note that $(P_k^{2n-1,m+1}(x)(1+x)^{(m+1)/2})_{k\geq0}$ is an orthogonal basis of the Hilbert space $L^2([-1,1],(1-x)^{2n-1}dx)$, more precisely
\[
\int_{-1}^1 P_k^{2n-1,m+1}(x)P_l^{2n-1,m+1}(x)(1-x)^{2n-1}(1+x)^{m+1}dx=\frac{2^{2n+m+1}}{2k+m+2n+1}\frac{\Gamma(k+2n)\Gamma(k+m+2)}{\Gamma(k+1)\Gamma(k+2n+m+1)}\delta_{kl}.
\]
For a smooth function $f(r, \theta)$, we can write
\[
f(r, \eta)=\sum_{m=0}^{+\infty}\sum_{k=0}^{+\infty} b_{k,m}\frac{\sin (m+1)\eta}{\sin \eta}P_k^{2n-1,m+1}(\cos 2r)\cdot(\cos r)^{m}
\]
where the $ b_{k,m}$'s are constants. We obtain then
\[
f(0,0)=\sum_{m=0}^{+\infty}\sum_{k=0}^{+\infty} b_{k,m}(m+1)P_k^{2n-1,m+1}(1).
\]
and we observe that $P_k^{2n-1,m+1}(1)={2n-1+k\choose k}$.
The  measure $d\mu$ is given in cylindric coordinates by
\[
d\mu_r=\frac{8\pi^{2n+1}}{\Gamma(2n)}(\sin r)^{4n-1}(\cos r)^3(\sin\eta)^2drd\eta
\]
Moreover, since
\begin{eqnarray*}
& &\int_{0}^\pi\int_0^\frac{\pi}{2} p_t(r, \eta){f(-r, -\eta)}d\mu_r \\
&=&\frac{4\pi^{2n+2}}{\Gamma(2n)}\sum_{m=0}^{+\infty}\sum_{k=0}^{+\infty}\alpha_{k,m}b_{k,m}e^{-\lambda_{k,m}t}
\left(\int_0^{\frac{\pi}{2}}(\cos r)^{2m+3}|P_k^{2n-1,m+1}|^2(\sin r)^{4n-1}dr\right)\\
&=&\frac{2\pi^{2n+2}}{\Gamma(2n)} \sum_{m=0}^{+\infty}\sum_{k=0}^{+\infty}\frac{\alpha_{k,m}b_{k,m}e^{-\lambda_{m,k}t}}{2k+m+2n+1}\frac{\Gamma(k+2n)\Gamma(k+m+2)}{\Gamma(k+1)\Gamma(k+2n+m+1)}
%||P_m^{n-1,|k|}||^2
\end{eqnarray*}
where $\lambda_{k,m}=4k(k+2n+m+1)+nm$, we obtain that
\[
\lim_{t\rightarrow 0}\int_{0}^\pi\int_0^\frac{\pi}{2} p_tfd\mu_r= f(0,0)
\]
as soon as $\alpha_{k,m}=\frac{\Gamma(2n)}{2\pi^{2n+2}}(2k+m+2n+1)(m+1){k+m+2n\choose 2n-1}$.
\end{proof}

Comparing this expansion with a result we obtained in  \cite{CRS}, we obtain a very nice formula relating $p_t$ to  the heat kernel of the Hopf fibration
 \[
  \mathbb{S}^1 \to \bS^{4n+1} \to  \mathbb{CP}^{2n}.
 \]
More precisely, we proved that   the subelliptic kernel $p_t^{CR}(r, \theta)$ of the above fibration writes:
\[
p_t^{CR}(r, \theta)=\frac{\Gamma(2n)}{2\pi^{2n+1}}\sum_{m=-\infty}^{+\infty}\sum_{k=0}^{+\infty} (2k+|m|+2n){k+|m|+2n-1\choose 2n-1}e^{-\lambda_{k,m}t+im \theta}(\cos r)^{|m|}P_k^{2n-1,|m|}(\cos 2r),
\]
where $\lambda_{k,m}=4k(k+|m|+2n)+4|m|n$, and $r$, $\theta$ are the Riemannian distance on $\mathbb{CP}^{2n}$, $\bS^1$ respectively. By comparing the kernels on $\BS$ and $\bS^{4n+1}$, we easily obtain that
\begin{proposition}
Let $p_t^{CR}(r, \theta)$ and $p_t(r, \theta)$ denote the subelliptic heat kernels on the CR sphere $\bS^{4n+1}$ and the quaternionic sphere $\BS$ respectively , then for $r\in[0,\frac{\pi}{2})$, $ \theta\in[0,\pi]$,
\begin{equation}\label{pt-qt}
-\frac{e^{4nt}}{2\pi\sin \theta\cos r}\frac{\partial}{\partial \theta} p_t^{CR}(r, \theta)=p_t(r,\theta).
\end{equation}
\end{proposition}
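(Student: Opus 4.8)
The plan is to obtain the identity by comparing the two Minakshisundaram--Pleijel spectral expansions term by term. In the expansion of $p_t^{CR}(r,\theta)$ recalled above the sum runs over $m\in\mathbb{Z}$ and $k\ge 0$, and the entire $\theta$-dependence is carried by the factor $e^{im\theta}$; since the eigenvalues $\lambda_{k,m}$, the powers $(\cos r)^{|m|}$ and the Jacobi polynomials $P_k^{2n-1,|m|}$ depend on $m$ only through $|m|$, I would first pair the contributions of $m$ and $-m$, rewriting the series using $e^{im\theta}+e^{-im\theta}=2\cos m\theta$ with $m$ now running over $m\ge 0$. Because of the factor $e^{-\lambda_{k,m}t}$ and the merely polynomial growth in $k$ and $m$ of the remaining coefficients (and of $P_k^{2n-1,m}(\cos 2r)$ and $\sin m\theta/\sin\theta$ on the relevant ranges), for each fixed $t>0$ the double series and its termwise $\theta$-derivative converge absolutely and locally uniformly, so differentiating under the sum is legitimate.

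Applying $\partial/\partial\theta$ kills the $m=0$ term and produces a factor $-2m\sin m\theta$ in every remaining term; dividing by $\sin\theta$ turns $\sin m\theta$ into $\sin m\theta/\sin\theta$ and dividing by $\cos r$ lowers the power of $\cos r$ by one, giving
\[
-\frac{1}{2\pi\sin\theta\cos r}\frac{\partial}{\partial\theta}p_t^{CR}(r,\theta)=\frac{\Gamma(2n)}{2\pi^{2n+2}}\sum_{m=1}^{\infty}\sum_{k=0}^{\infty} m\,(2k+m+2n)\binom{k+m+2n-1}{2n-1}e^{-\lambda_{k,m}t}\,\frac{\sin m\theta}{\sin\theta}(\cos r)^{m-1}P_k^{2n-1,m}(\cos 2r).
\]
The decisive step is then the shift of summation index $m\mapsto m+1$: it turns $m(2k+m+2n)\binom{k+m+2n-1}{2n-1}$ into $(m+1)(2k+m+2n+1)\binom{k+m+2n}{2n-1}$, the Jacobi polynomial into $P_k^{2n-1,m+1}(\cos 2r)$, the power of $\cos r$ into $(\cos r)^m$, and $\sin m\theta/\sin\theta$ into $\sin(m+1)\theta/\sin\theta$, i.e. exactly into the building blocks of $p_t(r,\theta)$ with coefficient $\alpha_{k,m}=\frac{\Gamma(2n)}{2\pi^{2n+2}}(2k+m+2n+1)(m+1)\binom{k+m+2n}{2n-1}$.

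Finally I would match the exponential factors, which is where the prefactor $e^{4nt}$ is used: since $\lambda_{k,m+1}=4k(k+m+1+2n)+4n(m+1)$, one has $4n-\lambda_{k,m+1}=-4\bigl(k(k+2n+m+1)+nm\bigr)$, so multiplying the shifted series by $e^{4nt}$ exactly reproduces the exponent $e^{-4[k(k+2n+m+1)+nm]t}$ of \eqref{pt}. Comparing the resulting series term by term with \eqref{pt} gives the claim. I do not expect a genuine obstacle; the only care needed is in justifying the interchange of differentiation and summation and the rearrangement of the double series (handled by the Gaussian-type decay in $t$) and in bookkeeping the index shift together with the binomial identities $\binom{k+m+2n-1}{2n-1}$ versus $\binom{k+m+2n}{2n-1}$.
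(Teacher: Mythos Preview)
Your proposal is correct and follows precisely the approach indicated in the paper, which simply states that the identity is obtained ``by comparing the kernels'' (i.e., the two spectral expansions); you have accurately carried out that comparison, including the index shift $m\mapsto m+1$ and the exponent bookkeeping that produces the $e^{4nt}$ factor.
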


\subsection{Integral representation of the subelliptic heat kernel}

Our goal in this section is to provide an alternative representation of $p_t(r,\eta)$ which shall later be useful in the purpose of studying small-times asymptotics.

From \eqref{radial-L} we can easily see that $\tilde{L}=\tilde{\Delta}_{\bS^{4n+3}}-\tilde{\Delta}_{\mathbf{SU}(2)}$, and $\tilde{L}$ commutes with $\tilde{\Delta}_{\mathbf{SU}(2)}$. Thus, we  have that
\[
e^{t\tilde{L}}=e^{-t\tilde{\Delta}_{\mathbf{SU}(2)}}e^{t\tilde{\Delta}_{\bS^{4n+3}}}.
\]

We denote by $q_t$ the heat kernel of the heat semigroup $e^{t\tilde{\Delta}_{\bS^{4n+3}}}$, then  the subelliptic heat kernel $p_t(r,\eta)$ can be obtained by applying the heat semigroup $e^{-t\tilde{\Delta}_{\mathbf{SU}(2)}}$ on $q_t$, i.e.
\[
p_t(r,\eta)=(e^{-t\tilde{\Delta}_{\mathbf{SU}(2)}}q_t)(r,\eta).
\]
We denote by $h_t(\eta)$ the heat kernel of $\tilde{\Delta}_{\mathbf{SU}(2)}$, that is, $h_t$ is the fundamental solution of 
\[
\frac{\partial }{\partial t}h_t(\eta)=\left(\frac{\partial^2}{\partial\eta^2}+2\cot\eta\frac{\partial}{\partial\eta}\right)h_t(\eta).
\]
Consider $g_t(\eta)=h_t(i\eta)$, then $g_t(\eta)$ satisfies 
\[
\frac{\partial }{\partial t}g_t(\eta)=-\left(\frac{\partial^2}{\partial\eta^2}+2\coth\eta\frac{\partial}{\partial\eta}\right)g_t(\eta).
\]
If we denote $\tilde{\Delta}_{\mathbf{SL}(2)}=\frac{\partial^2}{\partial \eta ^2}+2 \coth \eta \frac{\partial}{\partial \eta} $, then it is not hard to see that
\begin{equation}\label{pt-qts}
p_t(r,\eta)=(e^{t\tilde{\Delta}_{\mathbf{SL}(2)}}q_t)(r,-i\eta).
\end{equation}

As a conclusion, an integral representation of $p_t$,  can be obtained from an explicit expression of the heat semigroup $e^{t\tilde{\Delta}_{\mathbf{SL}(2)}}$. 
\begin{lemma}
Let $\tilde{\Delta}_{\mathbf{SL}(2)}=\frac{\partial^2}{\partial \eta ^2}+2 \coth \eta \frac{\partial}{\partial \eta} $. For every $f:\mathbb{R}_{\ge 0} \to \mathbb{R}$ in the domain of $\tilde{\Delta}_{\mathbf{SL}(2)}$, we have:
\begin{equation}\label{semigroup}
(e^{t\tilde{\Delta}_{\mathbf{SL}(2)}} f)(\eta)=\frac{e^{-t}}{\sqrt{\pi t}} \int_0^{+\infty} \frac{ \sinh r \sinh \left(  \frac{\eta r}{2t}\right) }{\sinh \eta} e^{-\frac{r^2+\eta^2}{4t}} f( r ) dr, \quad t \ge 0, \eta \ge 0.
\end{equation}
\end{lemma}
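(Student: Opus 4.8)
The plan is to recognize $\tilde\Delta_{\mathbf{SL}(2)} = \frac{\partial^2}{\partial\eta^2} + 2\coth\eta\,\frac{\partial}{\partial\eta}$ as the radial part of the Laplace--Beltrami operator on the three-dimensional real hyperbolic space $\mathbb{H}^3$, whose heat kernel is classically known in closed form. Indeed, on $\mathbb{H}^3$ the radial heat kernel is $(4\pi t)^{-3/2}\,\frac{\eta}{\sinh\eta}\,e^{-t}\,e^{-\eta^2/4t}$; the factor $\frac{\eta}{\sinh\eta}$ comes from the volume density $\sinh^2\eta$ and the ground-state transform, and the $e^{-t}$ reflects the bottom of the spectrum. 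Concretely, I would verify directly that $u_t(\eta) := \frac{e^{-t}}{\sqrt{4\pi t}\,}\,\frac{1}{\sinh\eta}\,e^{-\eta^2/4t}$ satisfies $\partial_t u_t = \tilde\Delta_{\mathbf{SL}(2)} u_t$: writing $u_t = \frac{1}{\sinh\eta} v_t$ with $v_t(\eta)=\frac{e^{-t}}{\sqrt{4\pi t}}e^{-\eta^2/4t}$, the conjugation identity $\frac{1}{\sinh\eta}\circ\bigl(\partial_\eta^2+2\coth\eta\,\partial_\eta\bigr)\circ\sinh\eta = \partial_\eta^2 - 1$ reduces the equation to the elementary fact that $v_t$ solves $\partial_t v = (\partial_\eta^2-1)v$, which is the standard Gaussian shifted by $e^{-t}$.

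Once the heat kernel $u_t$ is in hand, the semigroup acts by convolution against it \emph{with respect to the symmetric volume density} $\sinh^2\eta\,d\eta$ on $\mathbb{R}_{\ge 0}$, suitably symmetrized over $\mathbb{R}$. The cleanest route is the substitution $f(\eta) = \frac{g(\eta)}{\sinh\eta}$, which intertwines $\tilde\Delta_{\mathbf{SL}(2)}$ with $\partial_\eta^2 - 1$ on $L^2(\mathbb{R}_{\ge 0}, d\eta)$ with Dirichlet condition at $0$; there the semigroup $e^{t(\partial_\eta^2-1)}$ is given by the explicit odd-reflection Gaussian kernel
\[
(e^{t(\partial_\eta^2-1)}g)(\eta) = \frac{e^{-t}}{\sqrt{4\pi t}}\int_0^{+\infty}\Bigl(e^{-\frac{(\eta-r)^2}{4t}}-e^{-\frac{(\eta+r)^2}{4t}}\Bigr)g(r)\,dr.
\]
Translating back via $g = \sinh\eta\, f$ and using $\frac12\bigl(e^{-\frac{(\eta-r)^2}{4t}}-e^{-\frac{(\eta+r)^2}{4t}}\bigr) = e^{-\frac{r^2+\eta^2}{4t}}\sinh\bigl(\frac{\eta r}{2t}\bigr)$ yields exactly
\[
(e^{t\tilde\Delta_{\mathbf{SL}(2)}}f)(\eta) = \frac{e^{-t}}{\sqrt{\pi t}}\int_0^{+\infty}\frac{\sinh r\,\sinh\!\bigl(\frac{\eta r}{2t}\bigr)}{\sinh\eta}\,e^{-\frac{r^2+\eta^2}{4t}}f(r)\,dr,
\]
which is \eqref{semigroup}.

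The main technical point — rather than any deep obstacle — is justifying that the right-hand side genuinely represents the semigroup generated by $\tilde\Delta_{\mathbf{SL}(2)}$ on the stated domain: one must check that the operator defined by the integral formula is strongly continuous, that it has $\tilde\Delta_{\mathbf{SL}(2)}$ as generator (differentiating under the integral sign, which is legitimate because of the Gaussian decay), and that the Dirichlet-type behavior at $\eta=0$ matches the correct self-adjoint realization; the factor $\frac{1}{\sinh\eta}$ multiplying an odd function of $\eta$ keeps everything smooth at the origin. These are routine verifications once the conjugation $f\mapsto\sinh\eta\cdot f$ is set up, and they reduce the whole statement to the one-dimensional heat equation on a half-line with an absorbing potential.
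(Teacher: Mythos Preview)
Your proof is correct and rests on the same underlying idea as the paper's: intertwine $\tilde\Delta_{\mathbf{SL}(2)}$ with a simpler operator via multiplication by a function of $\eta$, then invoke a known heat kernel. The difference lies only in the choice of intertwiner and the target operator. You conjugate by $\sinh\eta$ to reach the one-dimensional operator $\partial_\eta^2-1$ on the half-line with Dirichlet boundary condition, whose heat kernel is the odd-reflected Gaussian times $e^{-t}$. The paper instead conjugates by $h(\eta)=\frac{\sinh\eta}{\eta}$ to reach $\tilde\Delta_{\mathbb{R}^3}-1$, the radial Euclidean Laplacian on $\mathbb{R}^3$ shifted by $-1$, and then computes $e^{t\tilde\Delta_{\mathbb{R}^3}}$ by integrating the three-dimensional Gaussian in spherical coordinates. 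The two reductions are equivalent, since $\eta\circ\tilde\Delta_{\mathbb{R}^3}\circ\frac{1}{\eta}=\partial_\eta^2$, so your $\sinh\eta$ is simply the composite of the paper's $\frac{\sinh\eta}{\eta}$ with the further conjugation by $\eta$; your route is slightly more direct, while the paper's keeps the geometric picture of passing through $\mathbb{R}^3$ explicit.

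One small slip: your displayed conjugation identity has the factors in the wrong order. The correct statement, and the one you actually use in the surrounding lines, is
\[
\sinh\eta\circ\Bigl(\partial_\eta^2+2\coth\eta\,\partial_\eta\Bigr)\circ\frac{1}{\sinh\eta}=\partial_\eta^2-1,
\]
not $\frac{1}{\sinh\eta}\circ(\cdots)\circ\sinh\eta$. This does not affect the argument.
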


\begin{proof}
We observe that:
\[
\tilde{\Delta}_{\mathbf{SL}(2)} f=\frac{1}{h} (\tilde{\Delta}_{\R^3}-1)(hf),
\]
where
\[
\tilde{\Delta}_{\R^3} =\frac{\partial^2}{\partial \eta ^2}+\frac{2}{  \eta} \frac{\partial}{\partial \eta} , \quad h(\eta)=\frac{\sinh \eta} {\eta}.
\]
As a consequence, we have
\[
(e^{t\tilde{\Delta}_{\mathbf{SL}(2)}} f)(\eta)=\frac{e^{-t}}{h(\eta)}e^{t \tilde{\Delta}_{\R^3}} (hf) (\eta).
\]
We are thus let with the computation of $e^{t \tilde{\Delta}_{\R^3}}$. The operator $\tilde{\Delta}_{\R^3}$ is the radial part of the Laplacian $\Delta_{\mathbb{R}^3}$, thus for $x \in \mathbb{R}^3$,
\[
e^{t \Delta_{\mathbb{R}^3}}( f \circ r)(x)= (e^{t\tilde{\Delta}_{\R^3}}f )(r(x)),
\]
where $r(x)=\| x \|$. Since,
\[
e^{t \Delta_{\mathbb{R}^3}}( f \circ r)(x)=\frac{1}{(4\pi t )^{3/2}} \int_{\mathbb{R}^3} e^{-\frac{\| y-x\|^2}{4t} } f(\|y \|) dy,
\]
a routine computation in spherical coordinates shows that
\[
e^{t\tilde{\Delta}_{\R^3}}f (\eta)=\frac{1}{\sqrt{\pi t}} \int_0^{+\infty} \frac{r}{\eta} \sinh \left( \frac{\eta r}{2t}\right) e^{-\frac{r^2+\eta^2}{4t}} f ( r ) dr.
\]
The conclusion easily follows.
\end{proof}

Now we can immediately deduce the integral representation of the subelliptic heat kernel on $\bS^{4n+3}$: 
\begin{proposition}\label{prop-pt}
For $t>0$, $r\in[0,\pi/2)$, $ \eta\in[0,\pi]$, 
\begin{equation}\label{pt-int} 
p_t(r, \eta)=\frac{e^{-t}}{\sqrt{\pi t}} \int_0^{+\infty} \frac{ \sinh y \sin \left(  \frac{\eta y}{2t}\right) }{\sin \eta} e^{-\frac{y^2-\eta^2}{4t}} q_t( \cos r\cosh y ) dy.
\end{equation}
\end{proposition}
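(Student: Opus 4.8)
The plan is to read \eqref{pt-int} off the identity \eqref{pt-qts} by substituting into it the explicit semigroup formula \eqref{semigroup} of the preceding Lemma, once the kernel $q_t$ occurring in \eqref{pt-qts} has been made explicit. First, the heat kernel $q_t$ of the round Laplacian of $\bS^{4n+3}$, issued from the pole $(0,\dots,0,1)$, is a function of $\cos r\cos\eta$ alone: for a point with cylindric coordinates \eqref{cylinder} its ambient inner product with the pole is the real part of $x_{n+1}=e^{I\theta_1+J\theta_2+K\theta_3}/\sqrt{1+\rho^2}$, namely $\cos\eta/\sqrt{1+\rho^2}=\cos r\cos\eta$ (using that $e^{I\theta_1+J\theta_2+K\theta_3}$ has real part $\cos\eta$, $\eta$ being the Riemannian distance to the identity on $\mathbf{SU}(2)$, and $\rho=\tan r$), so the zonal expansion reads
\[
q_t(r,\eta)=\sum_{k\ge 0}e^{-k(k+4n+2)t}\,c_k\,C_k^{2n+1}(\cos r\cos\eta),
\]
which for each $t>0$ converges to an entire function of the variable $\cos r\cos\eta$; I write $q_t(x)$ for this function of $x\in\mathbb{C}$.

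Next, unwind \eqref{pt-qts}. The passage from $\mathbf{SU}(2)$ to $\mathbf{SL}(2)$ there is the substitution $\eta\mapsto i\eta$, which turns $\tilde\Delta_{\mathbf{SU}(2)}=\partial_\eta^2+2\cot\eta\,\partial_\eta$ into $-\tilde\Delta_{\mathbf{SL}(2)}$ — hence the backward flow into the forward one — and simultaneously turns $y\mapsto q_t(\cos r\cos y)$ into $y\mapsto q_t(\cos r\cosh y)$, the final evaluation at $-i\eta$ undoing the substitution; thus \eqref{pt-qts} reads
\[
p_t(r,\eta)=\Bigl(e^{t\tilde\Delta_{\mathbf{SL}(2)}}\bigl[\,y\mapsto q_t(\cos r\cosh y)\,\bigr]\Bigr)(-i\eta).
\]
It now suffices to apply \eqref{semigroup} with $f(y)=q_t(\cos r\cosh y)$ at the evaluation point $-i\eta$ and to simplify using $\sinh(-i\eta)=-i\sin\eta$, $\sinh\!\bigl(\tfrac{-i\eta y}{2t}\bigr)=-i\sin\!\bigl(\tfrac{\eta y}{2t}\bigr)$, $(-i\eta)^2=-\eta^2$: the two factors $-i$ cancel in the ratio, $e^{-(y^2+(-i\eta)^2)/(4t)}=e^{-(y^2-\eta^2)/(4t)}$, and \eqref{pt-int} drops out. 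Modulo convergence, this one substitution is the whole proposition.

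The delicate point — and the step I expect to be the main obstacle — is to legitimate the analytic continuation of the right-hand side of \eqref{semigroup} from real argument to the imaginary argument $-i\eta$, since $f(y)=q_t(\cos r\cosh y)$ sits at the Gaussian growth threshold of the semigroup $e^{t\tilde\Delta_{\mathbf{SL}(2)}}$. Singularity analysis of the Gegenbauer series above (equivalently, the closed form of the heat kernel on an odd sphere, via the ladder operator $-\tfrac{1}{\sin d}\,\partial_d$) gives, as $y\to+\infty$,
\[
|q_t(\cos r\cosh y)|\ \le\ C_{t,r}\,y^{2n+1}\,e^{\frac{y^2}{4t}-(2n+1)y},
\]
so that, together with $\sinh y\sim\tfrac12 e^{y}$ and the factor $e^{-(y^2-\eta^2)/(4t)}$, the Gaussians cancel and the integrand in \eqref{pt-int} is $O_{t,r}\!\left(y^{2n+1}e^{-2n y}\right)$, uniformly for $\eta$ in compact subsets of $(0,\pi)$. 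The integral in \eqref{pt-int} is therefore absolutely convergent and, by differentiation under the integral sign, real-analytic in $\eta$ on $(0,\pi)$; since $p_t(r,\cdot)$ is also real-analytic on $(0,\pi)$ — immediate from \eqref{pt}, as $\eta\mapsto\frac{\sin(m+1)\eta}{\sin\eta}$ is a polynomial in $\cos\eta$ — and since the identity \eqref{pt-int} is established for $\eta$ near $0^{+}$ (where $-i\eta$ lies in the range in which the representation of $e^{t\tilde\Delta_{\mathbf{SL}(2)}}f$ converges directly), the two sides agree on all of $(0,\pi)$. The value at $\eta=0$ is recovered from the removable singularity of $\frac{\sin(\eta y/(2t))}{\sin\eta}$ (whose limit is $\frac{y}{2t}$), and $\eta=\pi$ follows by continuity. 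An alternative that avoids this bookkeeping would be to check directly that the right-hand side of \eqref{pt-int} solves $\partial_t u=\tilde L u$, concentrates at the pole as $t\to 0^{+}$, and has the integrability of a heat kernel, and to conclude by uniqueness of the subelliptic heat kernel; but the route through \eqref{pt-qts} and \eqref{semigroup} is markedly shorter.
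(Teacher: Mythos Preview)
Your proposal is correct and takes essentially the same approach as the paper: apply the semigroup formula \eqref{semigroup} of the preceding Lemma to $q_t$ via the identity \eqref{pt-qts} and evaluate at the imaginary argument $-i\eta$. The paper's proof is the single sentence ``by plugging in $q_t$ to \eqref{semigroup}, we immediately obtain the desired integral representation,'' so your discussion of the convergence of the integral and the analytic continuation in $\eta$ supplies rigor that the paper leaves implicit.
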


\begin{proof}
Because of \eqref{pt-qts}, by plugging in $q_t$ to \eqref{semigroup}, we immediately obtain the desired integral representation.
\end{proof}

For later use, we record here that the heat kernel $q_t$  on $\bS^{4n+3}$ with respect to its Riemannian structure has been well studied, here we list two useful representations of it: 
\begin{itemize}
\item[(1)] The spectral decomposition of $q_t$ is given by
\begin{equation}
q_t{(\cos\delta)}=\frac{\Gamma(2n+1)}{2\pi^{2n+2}}\sum_{m=0}^{+\infty}(m+2n+1)e^{-m(m+4n+2)t}C_m^{2n+1}(\cos \delta),
\end{equation}
where $\delta$ is the Riemannian distance from the north pole and
\[
C_m^{2n+1}(x)=\frac{(-1)^m}{2^m}\frac{\Gamma(m+4n+2)\Gamma(2n+3/2)}{\Gamma(4n+2)\Gamma(m+1)\Gamma(2n+m+3/2)}\frac{1}{(1-x^2)^{2n+1/2}}\frac{d^m}{dx^m}(1-x^2)^{2n+m+1/2}
\]
is a Gegenbauer polynomial.  
\item[(2)]Another expression of $q_t (\cos \delta)$ which is useful for the computation of small-time asymptotics is 
\begin{equation}\label{heat_kernel_odd}
q_t (\cos \delta)= e^{(2n+1)^2t} \left( -\frac{1}{2\pi \sin \delta} \frac{\partial}{\partial \delta} \right)^{2n+1} V,
\end{equation}
where $V(t,\delta)=\frac{1}{\sqrt{4\pi t}} \sum_{k \in \mathbb{Z}} e^{-\frac{(\delta-2k\pi)^2}{4t} }$. 
\end{itemize}

\begin{remark}
By using a result we proved  in \cite{CRS},  we can give an alternative proof of Proposition \ref{prop-pt}. From the result in \cite{CRS} we know that the subelliptic heat kernel of the fibration
\[
  \mathbb{S}^1 \to \bS^{4n+1} \to  \mathbb{CP}^{2n}
  \]
   writes
\[
p_t^{CR}(r,\eta)=\frac{1}{\sqrt{4\pi t}}\int_{-\infty}^{+\infty}e^{-\frac{y^2}{4t} }q_t^{R}(\cos r\cosh (y-i\eta))dy,
\] 
where $q^R_t$ is the Riemannian heat kernel  on $\bS^{4n+1}$. Plug it in to \eqref{pt-qt} we obtain that
\[
p_t(r,\eta)=-\frac{e^{4nt}}{2\pi\sin \eta\cos r}\frac{1}{\sqrt{4\pi t}}\int_{-\infty}^{+\infty}e^{-\frac{y^2}{4t} }\frac{\partial}{\partial \eta} q_t^{R}(\cos r\cosh (y-i\eta))dy
\]
Moreover, since it is well-known (see \cite{F}) that
\[
\frac{e^{-{(4n+1)t}}}{2\pi}\frac{d}{dx}q_t^R(x)=q_t(x),
\]
we obtain that
\begin{eqnarray*}
p_t(r,\eta)= \frac{e^{-t}}{\sin \eta}\frac{1}{\sqrt{\pi t}}\int_0^{+\infty}e^{-\frac{y^2-\eta^2}{4t} }\sin{\left(\frac{y\eta}{2t} \right)}q_t(\cos r\cosh y)\sinh (y)dy,
\end{eqnarray*}
which agrees with  \eqref{pt-int}.
\end{remark}

%\begin{proposition}
%For $t>0$, $r\in[0,\pi/2)$, $ \eta\in[0,\pi]$, 
%\begin{equation} 
%p_t(r, \eta)=\frac{e^{4nt}}{(4\pi t)^{3/2}\sin \eta\cos r}\int_{-\infty}^{+\infty}(-\eta+yi)\left(e^{-\frac{(y+i \eta)^2}{4t} }\right)q_t^R(\cos r\cosh y)dy
%\end{equation}
%\end{proposition}
%\begin{proof}
%By applying \eqref{pt-qt}, we immediately have that
%\begin{eqnarray*}
%p_t(r, \eta)&=&-\frac{e^{4nt}}{2\pi\sin \eta\cos r}\frac{1}{\sqrt{4\pi t}}\int_{-\infty}^{+\infty}\frac{\partial}{\partial \eta}\left(e^{-\frac{(y+i \eta)^2}{4t} }\right)q_t^R(\cos r\cosh y)dy\\
%&=& \frac{e^{4nt}}{(4\pi t)^{3/2}\sin \eta\cos r}\int_{-\infty}^{+\infty}(-\eta+iy)\left(e^{-\frac{(y+i \eta)^2}{4t} }\right)q_t^R(\cos r\cosh y)dy
%\end{eqnarray*}
%\end{proof}
%

\subsection{The Green function of the conformal sub-Laplacian}
The first consequence of  \eqref{pt-int} is  the Green function of the conformal sub-Laplacian $-L+4n(n+1)$.
\begin{theorem}
For all $r\in[0,\pi/2), \eta\in[0,\pi]$, the Green function of the conformal sub-Laplacian $-L+4n(n+1)$ is given by
\[
G(r,\eta)=\frac{\Gamma(n)\Gamma(n+1)}{8\pi^{2n+2}(1-2\cos r\cos\eta+\cos^2 r)^{n+1}}.
\]
\end{theorem}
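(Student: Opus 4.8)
The plan is to compute the Green function as the time integral of the subelliptic heat kernel, $G(r,\eta) = \int_0^{+\infty} e^{-4n(n+1)t} p_t(r,\eta)\, dt$, using the integral representation \eqref{pt-int}. First I would substitute the explicit formula \eqref{heat_kernel_odd} for the Riemannian heat kernel $q_t$ on $\bS^{4n+3}$ into \eqref{pt-int}; since $q_t(\cos r\cosh y)$ is being evaluated at an argument $\cos r\cosh y$ which may exceed $1$, one works with the analytic continuation, and the key simplification is that applying the differential operator $\left(-\frac{1}{2\pi\sin\delta}\frac{\partial}{\partial\delta}\right)^{2n+1}$ to the Gaussian-type function $V$ produces, after using $\cos\delta = \cos r\cosh y$, a clean expression. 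The factor $e^{(2n+1)^2 t}$ in $q_t$ combines with $e^{-t}$ from \eqref{pt-int} and $e^{-4n(n+1)t}$ from the Green function integral; one should check that the total exponential rate works out so that the $t$-integral converges, i.e. that $4n(n+1) + 1 - (2n+1)^2 = 4n^2+4n+1 - 4n^2-4n-1 = 0$ — so in fact the net polynomial-in-$t$ exponential from these three factors is trivial, and convergence at $t\to\infty$ comes from the Gaussian, while convergence at $t\to 0$ comes from the spatial non-coincidence.

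Next I would carry out the $t$-integration. After differentiating $V$, the heat kernel $q_t(\cos r\cosh y)$ becomes a finite sum (over $k\in\mathbb{Z}$ and over the derivatives hitting the various factors) of terms each of which, when multiplied by $\frac{\sinh y\,\sin(\eta y/2t)}{\sin\eta}e^{-(y^2-\eta^2)/4t}$ and by the Gaussians $e^{-(\delta - 2k\pi)^2/4t}$ coming from $V$, is of the form (rational function of $\cosh y$, $\cos r$, $t$) times $\exp(-(\text{something})/4t)$ times $\sin(\eta y/2t)$. The cleanest route is to swap the order of integration, do the $t$-integral first using the standard formula $\int_0^\infty t^{-\alpha} e^{-a/t}\,dt$ type identities (or, better, recognize that $\int_0^\infty \frac{1}{\sqrt{\pi t}} e^{-A/4t}\sin(B/2t)\,dt$ and its $t$-weighted variants have elementary closed forms), and then perform the resulting $y$-integral. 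Alternatively — and this may be the slicker path — one can avoid \eqref{heat_kernel_odd} entirely and instead use the spectral decomposition: integrating \eqref{pt} term by term against $e^{-4n(n+1)t}$ gives $G$ as a double series $\sum_{k,m}\frac{\alpha_{k,m}}{\lambda_{k,m}+4n(n+1)}\frac{\sin(m+1)\eta}{\sin\eta}(\cos r)^m P_k^{2n-1,m+1}(\cos 2r)$, and then one recognizes the generating-function identity for Jacobi polynomials that collapses this to the stated closed form $(1-2\cos r\cos\eta+\cos^2 r)^{-(n+1)}$.

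I would actually pursue both and present whichever is shorter, but I expect the integral-representation route to be the one intended here, since the section is explicitly built around \eqref{pt-int}. The main obstacle I anticipate is the bookkeeping in the $y$-integral after the $t$-integration: keeping track of the sum over $k\in\mathbb{Z}$ from the periodization in $V$, making sure the contributions from $k\neq 0$ vanish or telescope on the relevant range $r\in[0,\pi/2)$, $\eta\in[0,\pi]$, and correctly handling the branch/analytic-continuation issues when $\cos r\cosh y > 1$ so that writing $\cos\delta = \cos r\cosh y$ forces $\delta$ to be imaginary and $\sin\delta$, $\frac{\partial}{\partial\delta}$ to be interpreted accordingly. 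A secondary subtlety is justifying the interchange of the $t$-integral with the $y$-integral (and, in the spectral approach, with the double sum), which requires a dominated-convergence or Tonelli argument using the exponential decay of the summands; since all terms are eventually positive or absolutely summable this is routine but should be noted. Once the dust settles, the final identification with $\frac{\Gamma(n)\Gamma(n+1)}{8\pi^{2n+2}}(1-2\cos r\cos\eta+\cos^2 r)^{-(n+1)}$ should be a matter of matching constants, which one can cross-check against the known $\mathbb{CP}^{2n}$ Green function via the intertwining \eqref{pt-qt}.
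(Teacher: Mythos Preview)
Your approaches are not wrong in principle, but they are vastly more laborious than what the paper actually does. The paper's proof is essentially three lines: it quotes from \cite{CRS} the already-computed Green function for the CR fibration $\mathbb{S}^1\to\mathbb{S}^{4n+1}\to\mathbb{CP}^{2n}$,
\[
\int_0^{+\infty}e^{-4n^2 t}\,p_t^{CR}(r,\eta)\,dt=\frac{\Gamma(n)^2}{8\pi^{2n+1}(1-2\cos r\cos\eta+\cos^2 r)^{n}},
\]
and then applies the intertwining \eqref{pt-qt}, $p_t(r,\eta)=-\frac{e^{4nt}}{2\pi\sin\eta\cos r}\frac{\partial}{\partial\eta}p_t^{CR}(r,\eta)$, under the $t$-integral. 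The shift $e^{4nt}$ turns $e^{-4n^2 t}$ into $e^{-4n(n+1)t}$, and the $\eta$-derivative of $(1-2\cos r\cos\eta+\cos^2 r)^{-n}$ produces exactly the claimed $(1-2\cos r\cos\eta+\cos^2 r)^{-(n+1)}$ with the right constant. You mention \eqref{pt-qt} only at the very end as a cross-check; in fact it \emph{is} the proof.

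Your direct route through \eqref{pt-int} and \eqref{heat_kernel_odd} would work but involves tracking the periodization sum over $k\in\mathbb{Z}$, the analytic continuation to $\cos\delta>1$, and a nontrivial $y$-integral after the $t$-integration---all of which the paper bypasses entirely by leaning on the CR result. The spectral route would likewise require recognizing a Jacobi generating-function identity that the paper never needs. Either would reprove the CR Green function from scratch as a subproblem, which is precisely the work \cite{CRS} already did.
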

\begin{proof}
From \cite{CRS}, we already  know that
\[
\int_0^{+\infty}p_t^{CR}(r, \eta)e^{-4n^2t}dt=\frac{\Gamma\left(n\right)^2}{8\pi^{2n+1}(1-2\cos r\cos \theta+\cos^2r)^{n}} 
\]
Plug in \eqref{pt-qt}, we immediately have that
\[
\int_0^{+\infty}p_t(r, \eta)e^{-4(n^2+n)t}dt=-\frac{1}{2\pi\cos r\sin\eta}\frac{\partial}{\partial\eta}\left(\frac{\Gamma(n)^2}{8\pi^{2n+2}(1-2\cos r\cos\eta+\cos^2 r)^{n}} \right),
\]
thus 
\[
G(r,\eta)=\frac{\Gamma(n)\Gamma(n+1)}{8\pi^{2n+2}(1-2\cos r\cos\eta+\cos^2 r)^{n+1}}.
\]
\end{proof}

\subsection{Small time asymptotics}

Another advantage of the integral representation  \eqref{pt-int} is to deduce the small time asymptotics of the subelliptic heat kernel. The following small time asymptotics for the Riemannian heat kernel is well-known
\begin{align}\label{eq9}
q_t(\cos\delta)=\frac{1}{(4\pi t)^{2n+\frac{3}{2}}}\left(\frac{\delta}{\sin\delta}\right)^{2n+1}e^{-\frac{\delta^2}{4t}}\left(1+\left((2n+1)^2-\frac{2n(2n+1)(\sin\delta-\delta\cos\delta)}{\delta^2\sin\delta}\right)t+O(t^2)\right),
\end{align}
where $\delta \in [0,\pi)$  is the Riemannian distance from the north pole. By applying \eqref{pt-int}, one can then deduce the  small time asymptotics of the subelliptic heat kernel.
\begin{prop}
When $t\to 0$, 
\[
p_t(0,0)=\frac{1}{(4\pi t)^{2n+3}}(A_n+B_nt+O(t^2)),
\]
where $A_n=4\pi\int_0^{+\infty}\frac{y^{2n+2}}{(\sinh y)^{2n}}dy$ and $B_n=4\pi\int_0^{+\infty}\frac{y^{(2n+2)}}{(\sinh y)^{2n}}\left(4n^2+4n-\frac{2n(2n+1)(\sinh y-y\cosh y)}{y^2\sinh y}\right)dy$.
\end{prop}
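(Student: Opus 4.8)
The plan is to specialise the integral representation \eqref{pt-int} to $r=0$, pass to the limit $\eta\to 0$, and then insert the known small-time expansion \eqref{eq9} of the Riemannian heat kernel $q_t$, analytically continued to the range $\cos\delta=\cosh y\ge 1$.

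Carrying this out, since $\frac{\sin(\eta y/2t)}{\sin\eta}\to\frac{y}{2t}$ and $e^{-(y^2-\eta^2)/4t}\to e^{-y^2/4t}$ as $\eta\to 0$, one obtains
\[
p_t(0,0)=\frac{e^{-t}}{2t\sqrt{\pi t}}\int_0^{+\infty}y\,\sinh y\;e^{-\frac{y^2}{4t}}\,q_t(\cosh y)\,dy,
\]
the interchange of limit and integral being a routine dominated-convergence step. Next I would substitute \eqref{eq9} with $\delta=iy$, using $\sin(iy)=i\sinh y$, $\cos(iy)=\cosh y$ and $(iy)^2=-y^2$. The decisive feature is the cancellation $e^{-\delta^2/4t}=e^{y^2/4t}$ against the Gaussian coming from \eqref{pt-int}; moreover $(\delta/\sin\delta)^{2n+1}$ becomes $(y/\sinh y)^{2n+1}$, so that $y\,\sinh y\,(y/\sinh y)^{2n+1}=y^{2n+2}/(\sinh y)^{2n}$, while a short constant chase gives $\frac{1}{2t\sqrt{\pi t}\,(4\pi t)^{2n+3/2}}=\frac{4\pi}{(4\pi t)^{2n+3}}$. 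Hence
\[
p_t(0,0)=\frac{4\pi\,e^{-t}}{(4\pi t)^{2n+3}}\int_0^{+\infty}\frac{y^{2n+2}}{(\sinh y)^{2n}}\Bigl(1+c_n(y)\,t+O(t^2)\Bigr)\,dy,
\]
where $c_n(y)$ denotes the $t$-coefficient of \eqref{eq9} evaluated at $\delta=iy$.

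I would then expand $e^{-t}=1-t+O(t^2)$ and collect powers of $t$: the constant term gives $A_n=4\pi\int_0^{+\infty}\frac{y^{2n+2}}{(\sinh y)^{2n}}\,dy$, and the order-$t$ term gives $B_n=4\pi\int_0^{+\infty}\frac{y^{2n+2}}{(\sinh y)^{2n}}(c_n(y)-1)\,dy$; substituting $\delta=iy$ into the $t$-coefficient of \eqref{eq9} and simplifying the resulting hyperbolic expressions yields the claimed formula for $B_n$. Convergence of both integrals is immediate: for $n\ge 1$ the weight $y^{2n+2}/(\sinh y)^{2n}$ is $\sim y^2$ near $0$ and $\sim y^{2n+2}e^{-2ny}$ at infinity, and the correction factor entering $B_n$ has a finite limit at $0$ and is $O(1/y)$ at infinity.

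The one step requiring genuine care is the rigorous justification of integrating the asymptotic expansion \eqref{eq9} term by term against this weight uniformly in $y\in[0,+\infty)$ — in particular for large $y$, where $\cos\delta=\cosh y$ is large, so that one must control both the remainder and the full $t$-dependence of the analytically continued $q_t$. I expect this to be handled by invoking the exact formula \eqref{heat_kernel_odd} for $q_t$ on the odd sphere, which renders the $\delta$- and $t$-dependence completely explicit and, once the Gaussian factors are cancelled, permits a clean dominated-convergence argument; everything else is bookkeeping.
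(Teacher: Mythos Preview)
Your approach is essentially identical to the paper's: specialise \eqref{pt-int} at $r=0$, $\eta\to 0$ to get $p_t(0,0)=\frac{e^{-t}}{\sqrt{\pi t}}\int_0^{+\infty}\frac{y}{2t}\sinh y\,e^{-y^2/4t}q_t(\cosh y)\,dy$, substitute the expansion \eqref{eq9} with $\delta=iy$ so that the Gaussian factors cancel, and read off $A_n$, $B_n$. The paper carries this out in three lines without any discussion of the uniformity issues you flag; your remarks on dominated convergence and on the integrability of the weight $y^{2n+2}/(\sinh y)^{2n}$ are in fact more careful than what the paper provides.
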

\begin{proof}
We know  that
\begin{eqnarray*}
p_t(0,0)= \frac{e^{-t}}{\sqrt{\pi t}}\int_{0}^{+\infty}\sinh y\frac{y}{2t}e^{-\frac{y^2}{4t}}q_t(\cosh y)dy.
\end{eqnarray*}
Plug in \eqref{eq9}, we have that
\[
p_t(0,0)= \frac{4\pi e^{-t}}{(4\pi t)^{2n+3}}\int_{-\infty}^{+\infty}\frac{y^{2n+2}}{(\sinh y)^{2n}}\left(1+\left((2n+1)^2-\frac{2n(2n+1)(\sinh y-y\cosh y)}{y^2\sinh y}\right)t\right)dy.
\]
Thus we can obtain the small time asymptotic as follows:
\[
p_t(0,0)=\frac{1}{(4\pi t)^{2n+3}}(A_n+B_nt+O(t^2)),
\]
where $A_n=4\pi\int_0^{+\infty}\frac{y^{2n+2}}{(\sinh y)^{2n}}dy$ and $B_n=4\pi\int_0^{+\infty}\frac{y^{(2n+2)}}{(\sinh y)^{2n}}\left(4n^2+4n-\frac{2n(2n+1)(\sinh y-y\cosh y)}{y^2\sinh y}\right)dy$.
\end{proof}

The small time behavior of the subelliptic heat kernel on the vertical cut-locus, namely the points $(0,\eta)$ that can be achieved by flowing along vertical vector fields is quite different. A short-cut to deduce it is by differentiating the small time estimate of  $p_t^{CR}(0,\eta)$.
\begin{prop}\label{asy-cut}
For $ \eta\in(0,\pi)$, $t\rightarrow 0$,
\[
p_t(0, \eta)=\frac{1}{4\pi\sin \eta{2^{6n}t^{4n+1}(2n-1)!}}\left( (\pi - \eta) \eta^{2n-1}e^{-\frac{2\pi \eta- \eta^2}{4t}}   \right)(1+O(t)).
\]
\end{prop}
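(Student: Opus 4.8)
The plan is to use the short-cut suggested above: differentiate the small-time asymptotics of the CR subelliptic heat kernel $p_t^{CR}$ of the Hopf fibration $\mathbb{S}^1\to\mathbb{S}^{4n+1}\to\mathbb{CP}^{2n}$. First I would set $r=0$ in the intertwining \eqref{pt-qt}; since $\cos 0=1$ this gives
\[
p_t(0,\eta)=-\frac{e^{4nt}}{2\pi\sin\eta}\,\frac{\partial}{\partial\eta}p_t^{CR}(0,\eta),\qquad \eta\in(0,\pi),
\]
so the whole problem reduces to controlling $\partial_\eta p_t^{CR}(0,\eta)$ as $t\to0$. For that I would invoke from \cite{CRS} the small-time asymptotics of $p_t^{CR}$ on its own vertical cut-locus, which for $\eta\in(0,\pi)$ reads
\[
p_t^{CR}(0,\eta)=\frac{\eta^{2n-1}}{2^{6n}(2n-1)!\,t^{4n}}\,e^{-\frac{2\pi\eta-\eta^2}{4t}}\,(1+O(t)),
\]
and, crucially, holds locally uniformly for $\eta$ in compact subsets of $(0,\pi)$ together with its first $\eta$-derivative. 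This differentiable form is not merely formal: it follows from the integral representation $p_t^{CR}(0,\eta)=\frac{1}{\sqrt{4\pi t}}\int_{-\infty}^{+\infty}e^{-y^2/4t}q_t^R(\cosh(y-i\eta))\,dy$ recalled in the Remark following Proposition \ref{prop-pt}, by differentiating under the integral sign and re-running the contour/steepest-descent estimate of \cite{CRS}, whose critical configuration is unchanged.

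Granting this, the rest is a direct computation. On differentiating the expansion of $p_t^{CR}(0,\eta)$ in $\eta$, the only factor producing a $t^{-1}$ is the Gaussian, and
\[
\frac{\partial}{\partial\eta}\,e^{-\frac{2\pi\eta-\eta^2}{4t}}
=-\frac{2\pi-2\eta}{4t}\,e^{-\frac{2\pi\eta-\eta^2}{4t}}
=-\frac{\pi-\eta}{2t}\,e^{-\frac{2\pi\eta-\eta^2}{4t}};
\]
differentiating the polynomial prefactor $\eta^{2n-1}$ or the $1+O(t)$ remainder only feeds the relative $O(t)$ error, uniformly on compacts of $(0,\pi)$. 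Hence
\[
\frac{\partial}{\partial\eta}p_t^{CR}(0,\eta)=-\frac{(\pi-\eta)\,\eta^{2n-1}}{2^{6n+1}(2n-1)!\,t^{4n+1}}\,e^{-\frac{2\pi\eta-\eta^2}{4t}}\,(1+O(t)).
\]
Substituting this into the displayed form of \eqref{pt-qt}, writing $e^{4nt}=1+O(t)$ and absorbing it into the error term, I obtain
\[
p_t(0,\eta)=\frac{(\pi-\eta)\,\eta^{2n-1}}{4\pi\sin\eta\;2^{6n}(2n-1)!\,t^{4n+1}}\,e^{-\frac{2\pi\eta-\eta^2}{4t}}\,(1+O(t)),
\]
which is the assertion.

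The main obstacle I anticipate is exactly the point flagged above: upgrading the pointwise small-time expansion of $p_t^{CR}(0,\cdot)$ to one that survives a single $\eta$-differentiation with a controlled remainder. This is cleanest to carry out at the level of the integral representation of $p_t^{CR}$, not of its asymptotic formula. A self-contained alternative would be to bypass $p_t^{CR}$ and run a contour-shift analysis directly on \eqref{pt-int} with $r=0$; but that forces one to use the small-time form of the Riemannian heat kernel $q_t$ near the \emph{antipodal} point---equivalently the theta-series representation \eqref{heat_kernel_odd}, retaining its two leading image terms---and is therefore appreciably heavier than the short-cut.
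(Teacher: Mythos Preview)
Your proof is correct and follows essentially the same approach as the paper: set $r=0$ in the intertwining \eqref{pt-qt}, insert the known small-time asymptotic of $p_t^{CR}(0,\eta)$ from \cite{CRS}, and differentiate in $\eta$, keeping only the term where the derivative hits the Gaussian. You are in fact more careful than the paper's own argument in flagging and addressing the issue of differentiating an asymptotic expansion via the integral representation.
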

\begin{proof}
Since   
\[
-\frac{e^{4nt}}{2\pi\sin \eta\cos r}\frac{\partial}{\partial \eta} p_t^{CR}(r, \eta)=p_t(r,\eta),
\]
we just need to plug in the small time asymptotic of $p_t^{CR}$ on the cut locus:
\[
p_t^{CR}(0, \eta)=\frac{ \eta^{2n-1}}{2^{6n}t^{4n}(2n-1)!}e^{-\frac{2\pi \eta- \eta^2}{4t}}(1+O(t)),
\]
we then have that
\[
p_t(0, \eta)=\frac{1}{4\pi\sin \eta{2^{6n}t^{4n+1}(2n-1)!}}\left( (\pi - \eta) \eta^{2n-1}e^{-\frac{2\pi \eta- \eta^2}{4t}}   \right)(1+O(t)).
\]
\end{proof}

We now deduce the small time behavior of the kernel on the horizontal space of $\BS$. i.e. $(r,0)$, $r\not=0$.
\begin{prop}\label{propr0}
For $r\in(0,\frac{\pi}{2})$,  we have
\[
p_t(r,0)\sim_{t\to 0} \frac{1}{(4\pi t)^{2n+\frac{3}{2}}}\left(\frac{r}{\sin r}\right)^{2n+1}e^{-\frac{r^2}{4t}}\left(\frac{1}{1-r\cot r}\right)^{\frac{3}{2}}.
\]
\end{prop}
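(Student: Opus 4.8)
The plan is to extract the small-time behavior of $p_t(r,0)$ directly from the integral representation \eqref{pt-int}, taking the limit $\eta \to 0$ first so that the kernel becomes a one-dimensional Laplace-type integral over $y$. Setting $\eta = 0$ in \eqref{pt-int} and using $\frac{\sin(\eta y/2t)}{\sin \eta} \to \frac{y}{2t}$ as $\eta\to 0$, we get
\[
p_t(r,0) = \frac{e^{-t}}{\sqrt{\pi t}} \int_0^{+\infty} \frac{y \sinh y}{2t}\, e^{\frac{\eta^2}{4t}}\Big|_{\eta=0}\, e^{-\frac{y^2}{4t}}\, q_t(\cos r \cosh y)\, dy = \frac{e^{-t}}{2t\sqrt{\pi t}} \int_0^{+\infty} y \sinh y\, e^{-\frac{y^2}{4t}}\, q_t(\cos r \cosh y)\, dy.
\]
Next I would substitute the small-time Riemannian asymptotics \eqref{eq9} for $q_t(\cos r \cosh y)$, in which the relevant Riemannian distance $\delta = \delta(y)$ is determined by $\cos \delta(y) = \cos r \cosh y$; note that for $r \in (0,\pi/2)$ fixed this forces $y$ to range over a compact interval $[0, y_{\max}(r)]$ where $\cos r \cosh y_{\max} = 1$, i.e. $\cosh y_{\max} = 1/\cos r$. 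On this interval the leading exponential factor from \eqref{eq9} is $e^{-\delta(y)^2/4t}$, so the integrand carries a total exponential weight $e^{-(y^2 + \delta(y)^2)/4t}$, and a Laplace/steepest-descent analysis in $y$ is called for.

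The heart of the argument is then the Laplace-method computation. I would show that the phase function $\Phi(y) := y^2 + \delta(y)^2$ attains its minimum over $[0, y_{\max}]$ at an interior point $y^* = y^*(r)$, and that the minimum value equals $r^2$ — this is where the geometry enters, and it must reproduce the sub-Riemannian distance from the north pole to $(r,0)$, which on the horizontal cut-free region is just the Riemannian distance $r$ on $\mathbb{HP}^n$. Concretely, differentiating $\cos\delta = \cos r \cosh y$ gives $-\sin\delta \cdot \delta' = \cos r \sinh y$, so the critical point equation $\Phi'(y^*) = 2y^* + 2\delta \delta' = 0$ becomes $y^* \sin\delta = \delta \cos r \sinh y^*$; one checks this has a unique interior solution and that there $\delta(y^*) \sin\delta(y^*) \,/\,( \sin r \, r)$ and the algebra collapses to $\Phi(y^*) = r^2$ together with $\cos\delta(y^*) = \cos r \cosh y^* $ and a clean expression for the Hessian $\Phi''(y^*)$. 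The Gaussian integral around $y^*$ then contributes a factor $\sqrt{2\pi \cdot (2t) / \Phi''(y^*)} = \sqrt{4\pi t/\Phi''(y^*)}$, and assembling this with the prefactors $\frac{1}{2t\sqrt{\pi t}}$, the $q_t$ normalization $\frac{1}{(4\pi t)^{2n+3/2}}$, the geometric factors $\left(\frac{\delta}{\sin\delta}\right)^{2n+1}\big|_{y^*}$, $y^* \sinh y^*$, and $e^{-r^2/4t}$, should after simplification produce exactly the claimed
\[
\frac{1}{(4\pi t)^{2n+\frac{3}{2}}}\left(\frac{r}{\sin r}\right)^{2n+1} e^{-\frac{r^2}{4t}}\left(\frac{1}{1-r\cot r}\right)^{\frac{3}{2}},
\]
the factor $(1 - r\cot r)^{-3/2}$ arising as the combination of $\Phi''(y^*)^{-1/2}$ with the residual ratios of $\delta, \sin\delta, \sinh y^*, \cosh y^*$ evaluated at the critical point.

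The main obstacle I anticipate is the explicit evaluation at the critical point: one must solve $y^* \sin\delta(y^*) = \delta(y^*)\cos r \sinh y^*$ well enough to see both that $\Phi(y^*) = r^2$ exactly and that $\Phi''(y^*)$ and the accompanying geometric ratios combine into precisely $(1-r\cot r)^3$ — this requires exploiting the relations among $r$, $y^*$ and $\delta(y^*)$ rather than closed forms for each. A secondary technical point is justifying the interchange of the $t\to 0$ limit with the integral and controlling the contribution near the endpoint $y = y_{\max}$ (where $\delta \to 0$ and the $\left(\delta/\sin\delta\right)^{2n+1}$ factor stays bounded, so the exponential $e^{-\Phi(y)/4t}$ there is $e^{-y_{\max}^2/4t}$, which is subdominant to $e^{-r^2/4t}$ since $y_{\max} > r$); the endpoint $y=0$ gives $e^{-r^2/4t}$ but with vanishing $y\sinh y$, hence also not the dominant contribution unless $y^* = 0$, which happens only in the limit $r \to 0$. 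Once these points are handled, the result follows from the standard Laplace asymptotic formula.
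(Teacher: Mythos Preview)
Your overall strategy---insert the Riemannian asymptotics \eqref{eq9} into the integral representation and apply Laplace's method in $y$---is exactly what the paper does. However, you have mislocated the critical point, and this is a genuine gap. The phase $\Phi(y)=y^2+\delta(y)^2$ with $\cos\delta(y)=\cos r\cosh y$ has $\Phi'(y)=2y-2\delta(y)\,\frac{\cos r\sinh y}{\sin\delta(y)}$, so $\Phi'(0)=0$ and a short expansion ($\delta(y)=r-\tfrac12(\cot r)y^2+O(y^4)$) gives $\Phi''(0)=2(1-r\cot r)>0$ for $r\in(0,\pi/2)$. The unique minimum on $[0,y_{\max}]$ is therefore at the \emph{endpoint} $y=0$, not at an interior $y^*$; your claim that $y^*=0$ ``only in the limit $r\to 0$'' is incorrect.

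You were right to notice that the amplitude $y\sinh y$ vanishes at $y=0$, but that does not push the dominant contribution elsewhere---it simply means the relevant Laplace integral is of the type $\int y^2 e^{-\Phi''(0)y^2/(8t)}\,dy$ rather than $\int e^{-\Phi''(0)y^2/(8t)}\,dy$. Evaluating the former produces an extra power $t^{3/2}$ and the factor $(1-r\cot r)^{-3/2}$ (not $(1-r\cot r)^{-1/2}$), while the prefactor $\bigl(\delta/\sin\delta\bigr)^{2n+1}$ at $y=0$ supplies $(r/\sin r)^{2n+1}$. A separate but minor point: the integral is over all $y\ge 0$, not just $[0,y_{\max}]$; for $y>y_{\max}$ one has $\cos r\cosh y>1$ and the paper handles this region via the analytic continuation $\arccos\to i\,\mathrm{arccosh}$, showing its contribution is subdominant. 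Once you relocate the critical point to $y=0$ and run the Laplace computation with the quadratically vanishing amplitude, the stated asymptotic falls out directly.
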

%%%%%%%%%%%%%%%%%%%%%%%%%%%%
\begin{proof}
By Proposition \ref{prop-pt},
\[
p_t(r,0)=\frac{e^{-t}}{\sqrt{4\pi t}}\int_{-\infty}^{+\infty}(\sinh y)\frac{y}{2t}e^{-\frac{y^2}{4t}}q_t(\cos r\cosh y)dy,
\]
and by plugging in (\ref{eq9}), we obtain that 
\[
p_t(r,0)\sim_{t\to 0}\frac{1}{(4\pi t)^{2n+2}}(J_1(t)+J_2(t))
\]
where 
\[
J_1(t)=\int_{\cosh y\leq\frac{1}{\cos r}}e^{-\frac{y^2+(\arccos (\cos r\cosh y))^2}{4t}}\left(\frac{ y}{2t}\right)\sinh y
\left(\frac{\arccos (\cos r\cosh y)}{\sqrt{1-\cos^2r\cosh^2y}} \right)^{2n+1}dy
\]
and
\[
J_2(t)=\int_{\cosh y\geq\frac{1}{\cos r}}e^{-\frac{y^2-(\cosh^{-1} (\cos r\cosh y))^2}{4t}}\left(\frac{ y}{2t}\right)\sinh y
\left(\frac{\cosh^{-1} (\cos r\cosh y)}{\sqrt{\cos^2r\cosh^2y-1}} \right)^{2n+1}dy.
\]
The idea is to analyze $J_1(t)$ and $J_2(t)$ by Laplace method. \\
First, notice that in $\left[-\cosh^{-1}(\frac{1}{\cos r}),\cosh^{-1}(\frac{1}{\cos r})\right]$, $f(y)=y^2+(\arccos (\cos r\cosh y))^2$ has a unique minimum at $y=0$, where
\[
f^{''}(0)=2(1-r\cot r).
\] 
Hence by Laplace method, we can easily obtain that
\[
J_1(t)\sim_{t\rightarrow 0}e^{-\frac{r^2}{4t}}\left(\frac{r}{\sin r}\right)^{2n+1}\left(\frac{1}{1-r\cot r}\right)^{\frac{3}{2}}\sqrt{4\pi t}
\]
On the other hand, on $\left(-\infty,-\cosh^{-1}(\frac{1}{\cos r})\right)\cup\left(\cosh^{-1}(\frac{1}{\cos r}),+\infty\right)$, the function 
\[
g(y)=y^2-(\cosh^{-1} (\cos r\cosh y))^2
\]
has no minimum, which implies that $J_2(t)$ is negligible with respect to $J_1(t)$ in small $t$. Hence the conclusion.
\end{proof}

For the case  $(r,\eta)$,  with $r\not=0$,  the Laplace method no longer works, we need to use the steepest descent method.
 
\begin{prop}\label{asym}
Let  $r\in(0,\frac{\pi}{2})$, $\eta\in[0,\pi]$. % {\color{red} (Notice the range changed compare to $SU(2)$).} 
Then when $t\to 0$,
\begin{equation}\label{sta}
p_t(r,\eta)\sim_{t\to 0}-\frac{1}{(4\pi t)^{2n+\frac{3}{2}}} \frac{\sin\varphi(r,\eta)}{\sin\eta\sin r}\frac{(\arccos u(r,\eta))^{2n+1}}{\sqrt{1-\frac{u(r,\eta)\arccos u(r,\eta)}{\sqrt{1-u^2(r,\eta)}}}}\frac{e^{-\frac{(\varphi(r,\eta)+\eta)^2\tan^2 r}{4t\sin^2(\varphi(r,\eta))}}}{(1-u(r,\eta)^2)^{n}},
\end{equation}
where $u(r,\eta)=\cos r\cos\varphi(r,\eta)$ and $\varphi(r,\eta)$ is the unique solution in $[0,\pi]$ to the equation
\begin{equation}\label{varphi}
\varphi(r,\eta)+\eta=\cos r\sin\varphi(r,\eta)\frac{\arccos(\cos \varphi(r,\eta)\cos r)}{\sqrt{1-\cos^2r\cos^2\varphi(r,\eta)}},
\end{equation}

\end{prop}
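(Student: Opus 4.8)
The plan is to carry out the complex steepest-descent (saddle-point) method on the integral representation \eqref{pt-int}. First I would insert the leading small-time expansion \eqref{eq9} of the Riemannian heat kernel $q_t$ into \eqref{pt-int}, set $\cos\delta(y)=\cos r\cosh y$, and split $\sin\!\left(\frac{\eta y}{2t}\right)=\frac{1}{2i}\bigl(e^{i\eta y/2t}-e^{-i\eta y/2t}\bigr)$. Using that the integrand in \eqref{pt-int} is even in $y$, this reduces the estimate of $p_t(r,\eta)$, up to an explicit elementary prefactor, to the two Laplace-type contour integrals
\[
\int_{\mathbb{R}} g(y)\,e^{-\Phi_{\pm}(y)/4t}\,dy,\qquad \Phi_{\pm}(y)=(y\mp i\eta)^2+\delta(y)^2,\quad g(y)=\sinh y\left(\frac{\delta(y)}{\sin\delta(y)}\right)^{2n+1}.
\]
Since $q_t(\cos\delta)$ is an entire function of $\cos\delta$ (e.g.\ from \eqref{heat_kernel_odd}), these integrands are entire in $y$; moreover, as $\mathrm{Re}\,y\to\pm\infty$ along any horizontal strip they decay like $(\cos r)^{|\mathrm{Re}\,y|/2t}$, so contours may be deformed freely and the tails are negligible.

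Next I would locate and analyse the saddle. Looking for a zero of $\Phi_{\pm}'$ on the imaginary axis, $y=\pm i\varphi$ with $\varphi$ real, and using $\sinh(i\varphi)=i\sin\varphi$, $\cosh(i\varphi)=\cos\varphi$ together with $\delta'(y)=-\frac{\cos r\sinh y}{\sin\delta}$, the critical-point equation $\Phi_{\pm}'(\pm i\varphi)=0$ collapses exactly to the transcendental relation \eqref{varphi}; a routine monotonicity argument (using that the right-hand side of \eqref{varphi} has derivative $r\cot r<1$ at $\varphi=0$) shows that, once the branch of $\arccos$ is fixed as in the statement, \eqref{varphi} has a unique solution $\varphi(r,\eta)$. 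Writing $u=u(r,\eta)=\cos r\cos\varphi$ and exploiting the elementary identity $\sin^2\delta-\cos^2 r\sin^2\varphi=1-\cos^2 r\cos^2\varphi-\cos^2 r\sin^2\varphi=\sin^2 r$ (valid at the saddle, where $\cos\delta=u$), one computes the saddle value
\[
\Phi_{\pm}(\pm i\varphi)=\frac{\sin^2 r\,\delta^2}{\sin^2\delta}=\frac{(\varphi+\eta)^2\tan^2 r}{\sin^2\varphi}
\]
(the second equality again by \eqref{varphi}), and, differentiating $\delta'$ once more and using the same identity,
\[
\Phi_{\pm}''(\pm i\varphi)=\frac{2\sin^2 r}{1-u^2}\left(1-\frac{u\arccos u}{\sqrt{1-u^2}}\right),
\]
which is real and strictly positive, the positivity of $1-\frac{u\arccos u}{\sqrt{1-u^2}}$ on $(-1,1)$ being elementary. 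This pair of computations is the algebraic heart of the matter.

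Then I would carry out the steepest descent. Because $\Phi_{\pm}''(\pm i\varphi)>0$ is real, the saddle is traversed horizontally, so $\mathbb{R}$ can be deformed to a descent contour through $\pm i\varphi$ without crossing singularities; on that contour one substitutes \eqref{eq9}, which is legitimate because the uniform validity of \eqref{eq9} for $\cos\delta$ in compact subsets of $(-1,1)$ propagates, by analyticity and Cauchy estimates, to a complex neighbourhood of the relevant arc. A single application of the steepest-descent formula gives the leading order of each integral as $g(\pm i\varphi)\,e^{-\Phi_{\pm}(\pm i\varphi)/4t}\,\sqrt{8\pi t/\Phi_{\pm}''(\pm i\varphi)}$; since $g(\pm i\varphi)=\pm i\sin\varphi\,(\cdots)$ while the remaining factors are real, the two contributions are complex conjugates and combine into twice an imaginary part. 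Collecting the prefactors together with $\sqrt{8\pi t/\Phi''}$ — the latter producing the factor $\sqrt{1-u\arccos u/\sqrt{1-u^2}}$ in the denominator, an extra $\sqrt{1-u^2}$, and the correct power $(4\pi t)^{-(2n+3/2)}$ — and simplifying via $\left(\frac{\delta}{\sin\delta}\right)^{2n+1}\sqrt{1-u^2}=\frac{(\arccos u)^{2n+1}}{(1-u^2)^{n}}$ yields exactly \eqref{sta}.

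I anticipate two main obstacles. The first is the rigorous contour analysis: one must verify that the descent contour through $\pm i\varphi$ is reachable from $\mathbb{R}$ by an admissible homotopy and that $\mathrm{Re}\,\Phi_{\pm}$ attains a strict global minimum on it at the saddle. This is precisely where the hypothesis that $(r,\eta)$ lies off the vertical cut-locus enters: otherwise either a second saddle contributes at the same exponential order, or the transition locus $\cos r\cosh y=1$ — where \eqref{eq9} changes character, cf.\ the splitting of the integral in the proof of Proposition~\ref{propr0} — becomes relevant, and the clean single-saddle formula \eqref{sta} breaks down. The second obstacle is purely the bookkeeping of constants and of the branch/orientation in the steepest-descent formula (including the overall sign in \eqref{sta}, which is tied to the convention for $\varphi$ in \eqref{varphi}); the computation of $\Phi_{\pm}''(\pm i\varphi)$, though the crux algebraically, is otherwise mechanical.
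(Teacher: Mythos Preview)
Your proposal is correct and follows essentially the same steepest-descent route as the paper: split the sine into two exponentials to obtain the pair of complex-phase integrals with $\Phi_\pm(y)=(y\mp i\eta)^2+\delta(y)^2$, locate the conjugate saddles on the imaginary axis via \eqref{varphi}, compute $\Phi''$ at the saddle, and combine the two contributions. The only minor difference is that the paper outsources the existence/uniqueness of the critical point and the formula for $f''(i\varphi)$ to a lemma in \cite{CRS}, while you sketch these computations inline; your flagged caveat about the sign/branch convention for $\varphi$ is well taken, since the paper's own conventions here are not entirely consistent.
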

\begin{proof}
Since 
\[
p_t(r, \eta)=\frac{e^{-t}}{\sqrt{4\pi t}} \int_{-\infty}^{+\infty} \frac{ \sinh y }{2i\sin \eta}\left(e^{-\frac{(y-i\eta)^2}{4t}}-e^{-\frac{(y+i\eta)^2 }{4t}}\right)q_t( \cos r\cosh y ) dy,
\]
by plugging in \eqref{eq9}, we obtain that
\[
p_t(r,\eta)\sim_{t\rightarrow 0}\frac{1}{(4\pi t)^{2n+2}} (A-B),
\]
where
\[
A=\int_{-\infty}^{+\infty} \frac{ \sinh y }{2i\sin \eta}\left(e^{-\frac{(y-i\eta)^2+(\arccos (\cos r\cosh y))^2}{4t}}\right) \left(\frac{\arccos (\cos r\cosh y)}{\sqrt{1-\cos^2r\cosh^2y}} \right)^{2n+1}dy
\]
and
\[
B=\int_{-\infty}^{+\infty} \frac{ \sinh y }{2i\sin \eta}\left(e^{-\frac{(y+i\eta)^2+(\arccos (\cos r\cosh y))^2 }{4t}}\right) \left(\frac{\arccos (\cos r\cosh y)}{\sqrt{1-\cos^2r\cosh^2y}} \right)^{2n+1}dy.
\]
We first deduce the small time asymptotic of $B$: By applying the steepest descent method, we can constraint the integral on the strip  $|\mathbf{Re}(y)|<\cosh^{-1}\left( \frac{1}{\cos r}\right)$ where, due to the result in \cite{CRS} (Lemma 3.9), 
\[
f(y)=(y+i\eta)^2+(\arccos (\cos r \cosh y))^2
\]
has a critical point at $i\varphi(r,\eta)$. $\varphi(r,\eta)$ is the unique solution in $[0,\pi]$ to the equation
\eqref{varphi} and 
\[
f^{''}(i\varphi(r,\eta))=\frac{2\sin^2 r}{1-u(r,\eta)^2}\left(1-\frac{u(r,\eta)\arccos u(r,\eta)}{\sqrt{1-u^2(r,\eta)}} \right)
\]
is positive, where $u(r,\eta)=\cos r\cos\varphi(r,\eta)$. 
Thus in small time, $B$ has the following estimates:
\begin{equation}\label{B}
B\sim_{t\to 0}\frac{\sqrt{4\pi t}\sin\varphi(r,\eta)}{2\sin\eta\sin r}\frac{(\arccos u(r,\eta))^{2n+1}}{\sqrt{1-\frac{u(r,\eta)\arccos u(r,\eta)}{\sqrt{1-u^2(r,\eta)}}}}\frac{e^{-\frac{(\varphi(r,\eta)+\eta)^2\tan^2 r}{4t\sin^2(\varphi(r,\eta))}}}{(1-u(r,\eta)^2)^{n}}.
\end{equation} 
To estimate $A$, we denote 
\[
g(y)=(y-i\eta)^2+(\arccos (\cos r \cosh y))^2.
\]
Simple computations show that $g(y)$ has a critical point at $-i\varphi(r,\eta)$ where $\varphi(r,\eta)$ is as described in \eqref{varphi}. Thus
\begin{equation}\label{A}
A\sim_{t\to 0}-\frac{\sqrt{4\pi t}\sin\varphi(r,\eta)}{2\sin\eta\sin r}\frac{(\arccos u(r,\eta))^{2n+1}}{\sqrt{1-\frac{u(r,\eta)\arccos u(r,\eta)}{\sqrt{1-u^2(r,\eta)}}}}\frac{e^{-\frac{(\varphi(r,\eta)+\eta)^2\tan^2 r}{4t\sin^2(\varphi(r,\eta))}}}{(1-u(r,\eta)^2)^{n}}.
\end{equation} 
Therefore we obtain \eqref{sta} by plugging in \eqref{A} and \eqref{B}.
\end{proof}

\begin{remark}
In proposition \ref{asym}, if we let $\eta=0$,  \eqref{varphi} has a unique solution at $\varphi=0$ and
\[
\lim_{\eta\to0} \frac{\varphi(r,\eta)}{\eta}=-\frac{1}{1-r\cot r}.
\]
Thus \eqref{sta} gives that
\[
p_t(r,0)\sim_{t\to 0} \frac{1}{(4\pi t)^{2n+\frac{3}{2}}}\left(\frac{r}{\sin r}\right)^{2n+1}e^{-\frac{r^2}{4t}}\left(\frac{1}{1-r\cot r}\right)^{\frac{3}{2}},
\]
which, as it should be,  agrees with the result in Proposition \ref{propr0}.
\end{remark}

\begin{remark}
By symmetry, the sub-Riemannian distance from the north pole to any point on $\bS^{4n+3}$ only depends on $r$ and $\eta$. If we denote it by $d(r,\eta)$, then from the previous propositions,
\item[(1)]
For $\eta\in [0,\pi]$,
\[
d^2(0,\eta)=2\pi \eta-\eta^2
\]
\item[(2)]
For $\eta\in [0,\pi]$, $r\in\left(0,\frac{\pi}{2}\right)$,
\[
d^2(r,\eta)=\frac{(\varphi(r,\eta)+\theta)^2\tan^2 r}{\sin^2(\varphi(r,\eta))}
\]
In particular,  the sub-Riemannian diameter of $\bS^{4n+3}$ is $\pi$.
\end{remark}

\section{The subelliptic heat kernel on $\mathbb{CP}^{2n+1}$}

We now turn to the second main object of our study.
In this section we study the subelliptic heat kernel of the Riemannian submersion  $\mathbb{CP}^{2n+1} \to\mathbb{HP}^n$ which is induced from the quaternionic Hopf fibration. 
\subsection{Projected Hopf fibration}

Besides the action of  $\mathbf{SU}(2)$ on $\mathbb{S}^{4n+3}$ that induces the quaternionic Hopf fibration which was studied in the previous sections, we can also consider the action of $\mathbb{S}^1$ on $\mathbb{S}^{4n+3}$ that induces the classical Hopf fibration:
\begin{diagram}
\mathbb{S}^1 & \rTo & \mathbb{S}^{4n+3}  & \rTo &  \mathbb{CP}^{2n+1}.
\end{diagram}
We can then see $\mathbb{S}^1$ as a subgroup of $\mathbf{SU}(2)$ and deduce a a fibration
\begin{diagram}
\mathbf{SU}(2)  / \mathbb{S}^1 =  \mathbb{CP}^1 & \rTo & \mathbb{CP}^{2n+1} & \rTo & \mathbb{HP}^n
\end{diagram}
that makes the following diagram commutative
\begin{diagram}\label{diag}
  & & \mathbb{S}^1 & & \\
  & \ldTo & \dTo & & \\
 \mathbf{SU}(2) & \rTo &\mathbb{S}^{4n+3} & \rTo & \mathbb{HP}^n \\
 \dTo & &\dTo & \ruTo & \\
 \mathbb{CP}^1 & \rTo & \mathbb{CP}^{2n+1} & & 
\end{diagram}
We consider the sub-Laplacian ${\mathcal{L}}$ on  $\mathbb{CP}^{2n+1}$ which is the lift of the Laplace-Beltrami operator of  $\mathbb{HP}^n$. From the above diagram, ${\mathcal{L}}$ is also the projection of the sub-Laplacian $L$ of $\mathbb{S}^{4n+3} $ on $\mathbb{CP}^{2n+1}$. As we have seen before,  the radial part of $L$ is 
\[
\tilde{L}=\frac{\partial^2}{\partial r^2}+((4n-1)\cot r-3\tan r)\frac{\partial}{\partial r}+\tan^2r (\frac{\partial^2}{\partial \eta^2}+2\cot \eta\frac{\partial}{\partial \eta})
\]
where $r$ is the Riemannian distance on $\mathbb{HP}^n$ and $\eta$ the Riemannian distance on $ \mathbf{SU}(2) $. The operator
\[
\tilde{\Delta}_{SU(2)}=\frac{\partial^2}{\partial \eta^2}+2\cot \eta\frac{\partial}{\partial \eta}
\]
is the radial part of the Laplace-Beltrami operator on $ \mathbf{SU}(2) $. As it has been proved in Baudoin-Bonnefont (see \cite{BB}), by using the fibration,
\begin{diagram}
\mathbb{S}^1 & \rTo & \mathbf{SU}(2)  & \rTo &  \mathbb{CP}^{1}
\end{diagram}
we can write
\[
\frac{\partial^2}{\partial \eta^2}+2\cot \eta\frac{\partial}{\partial \eta}=\frac{\partial^2}{\partial \phi^2}+2\cot 2\phi\frac{\partial}{\partial \phi}+(1+\tan^2\phi )\frac{\partial^2}{\partial \theta^2}
\]
where $\phi$ is the Riemannian distance on $\mathbb{CP}^{1}$ and $\frac{\partial}{\partial \theta}$ the generator of the action of $\mathbb{S}^1$ on $\mathbf{SU}(2)$. We deduce that $\tilde{L}$ can be written as
\[
\frac{\partial^2}{\partial r^2}+((4n-1)\cot r-3\tan r)\frac{\partial}{\partial r}+\tan^2r\left( \frac{\partial^2}{\partial \phi^2}+2\cot 2\phi\frac{\partial}{\partial \phi}+(1+\tan^2\phi )\frac{\partial^2}{\partial \theta^2}\right)
\]
Therefore the radial part of the sub-Laplacian $\mathcal{L}$ on  $\mathbb{CP}^{2n+1}$ is given by 
\begin{equation}\label{L-CP}
\tilde{\mathcal{L}}=\frac{\partial^2}{\partial r^2}+((4n-1)\cot r-3\tan r)\frac{\partial}{\partial r}+\tan^2r\left( \frac{\partial^2}{\partial \phi^2}+2\cot 2\phi\frac{\partial}{\partial \phi}\right).
\end{equation}

We can also see , and this will be later used, that the Riemannian measure reads, up to a normalization, $(\sin r)^{4n-1} (\cos r)^3 \sin 2\phi dr d\phi$.
\subsection{Spectral decomposition of the subelliptic heat kernel on $\mathbb{CP}^{2n+1}$}
We now study the subelliptic heat kernel $h_t(r,\phi)$ associated to $\mathcal{L}$. Similarly as in Proposition \ref{spectral}, we get the spectral decomposition of $h_t$.
\begin{proposition}
For $t>0$, $r\in[0,\frac{\pi}{2})$, $ \phi\in[0,\pi]$, the subelliptic kernel is given by
\[
h_t(r,\phi)=\sum_{m=0}^{+\infty}\sum_{k=0}^{\infty}\sigma_{k,m}e^{-[4k(k+2n+2m+1)+2nm]t}(\cos r)^{2m}P_m^{0,0}(\cos2\phi)P_k^{2n-1,2m+1}(\cos 2r),
\]
where $\sigma_{k,m}=\frac{\Gamma(2n)}{4\pi^{2n+2}}(2k+2m+2n+1)(2m+1){k+2m+2n\choose 2n-1}$.
\end{proposition}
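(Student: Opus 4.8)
The plan is to mimic exactly the argument used for the spectral decomposition of $p_t$ on $\bS^{4n+3}$ (Proposition \ref{spectral}), replacing the fiber $\mathbf{SU}(2)$ by $\mathbb{CP}^1$. First I would expand $h_t(r,\phi)$ in the eigenbasis of the fiber operator $\tilde{\Delta}_{\mathbb{CP}^1}=\frac{\partial^2}{\partial\phi^2}+2\cot 2\phi\,\frac{\partial}{\partial\phi}$. The eigenfunctions of this operator are obtained by the substitution $x=\cos 2\phi$, which turns it into the Jacobi operator $4(1-x^2)\frac{\partial^2}{\partial x^2}-8x\frac{\partial}{\partial x}$ with eigenvalues $-4m(m+1)$ and eigenfunctions the Legendre polynomials $P_m^{0,0}(\cos 2\phi)$, $m\ge 0$. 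So I would write
\[
h_t(r,\phi)=\sum_{m=0}^{+\infty}P_m^{0,0}(\cos 2\phi)\,\phi_m(t,r),
\]
and plug into $\partial_t h_t=\tilde{\mathcal{L}}h_t$ using \eqref{L-CP} to get, for each $m$,
\[
\frac{\partial\phi_m}{\partial t}=\frac{\partial^2\phi_m}{\partial r^2}+\big((4n-1)\cot r-3\tan r\big)\frac{\partial\phi_m}{\partial r}-4m(m+1)\tan^2 r\,\phi_m.
\]

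Next I would remove the zero-order term by the same ansatz as before: set $\phi_m(t,r)=e^{-2nm t}(\cos r)^{2m}\varphi_m(t,r)$. A routine computation (using $\frac{d}{dr}\log(\cos r)^{2m}=-2m\tan r$ and the identity $2m(2m-1)\tan^2 r+2m= 2m(2m+1)\tan^2r - 2m\cdot(\text{const})$ type bookkeeping, exactly paralleling the $\bS^{4n+3}$ case with $m\rightsquigarrow 2m$) should reduce the equation to
\[
\frac{\partial\varphi_m}{\partial t}=\frac{\partial^2\varphi_m}{\partial r^2}+\big[(4n-1)\cot r-(4m+3)\tan r\big]\frac{\partial\varphi_m}{\partial r}.
\]
Then the change of variable $\varphi_m(t,r)=g_m(t,\cos 2r)$ converts this into the Jacobi heat equation $\partial_t g_m=4\Psi_{2m}(g_m)$ where $\Psi_{2m}=(1-x^2)\partial_x^2+[(2m+2-2n)-(2n+2m+2)x]\partial_x$, whose eigenfunctions are the Jacobi polynomials $P_k^{2n-1,2m+1}(x)$ with eigenvalues $-k(k+2n+2m+1)$. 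Assembling the two exponential factors gives the claimed exponent $-[4k(k+2n+2m+1)+2nm]t$ and the claimed spatial profile $(\cos r)^{2m}P_m^{0,0}(\cos 2\phi)P_k^{2n-1,2m+1}(\cos 2r)$.

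Finally I would pin down the constants $\sigma_{k,m}$ by imposing the initial condition $h_t\to\delta_{\text{north pole}}$ as $t\to 0$, integrating $h_t$ against a test function expanded in the same orthogonal basis and using the Jacobi orthogonality relation already quoted in the $\bS^{4n+3}$ proof together with $P_k^{2n-1,2m+1}(1)=\binom{2n-1+k}{k}$, $P_m^{0,0}(1)=1$, and the normalization of the Riemannian measure $(\sin r)^{4n-1}(\cos r)^3\sin 2\phi\,dr\,d\phi$ recorded after \eqref{L-CP}; matching $\lim_{t\to0}\int h_t f\,d\mu=f(0,0)$ forces $\sigma_{k,m}=\frac{\Gamma(2n)}{4\pi^{2n+2}}(2k+2m+2n+1)(2m+1)\binom{k+2m+2n}{2n-1}$. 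Alternatively, and perhaps more cleanly, one can obtain the whole formula by substituting the Baudoin--Bonnefont fiber decomposition $\tilde\Delta_{\mathbf{SU}(2)}=\tilde\Delta_{\mathbb{CP}^1}+(1+\tan^2\phi)\partial_\theta^2$ into \eqref{pt} and integrating out the $\mathbb{S}^1$ variable, i.e. averaging $p_t$ over the circle fiber; this should turn $\frac{\sin(m+1)\eta}{\sin\eta}$ into a Legendre polynomial in $\cos 2\phi$ after the appropriate reindexing, and would recover the constants from the $\alpha_{k,m}$ directly. The main obstacle is purely computational: carefully tracking the index shift $m\rightsquigarrow 2m$ and the resulting value of the $(\cos r)$-exponent and the $t$-exponent through the two changes of variable, and then doing the constant-matching integral without arithmetic slips — there is no conceptual difficulty beyond what was already handled for $p_t$.
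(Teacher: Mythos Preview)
Your approach is essentially identical to the paper's: expand in the $\mathbb{CP}^1$ eigenfunctions $P_m^{0,0}(\cos 2\phi)$, peel off a factor $(\cos r)^{2m}$ times an exponential to kill the $\tan^2 r$ potential, reduce to the Jacobi heat equation in $\cos 2r$, and fix the constants by the initial condition. The alternative you mention at the end (averaging $p_t$ over the $\mathbb{S}^1$ fiber) is in fact how the paper derives the integral representation of $h_t$ in the subsequent proposition, not the spectral expansion itself.

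One concrete arithmetic point deserves correction. Your ansatz $\phi_m(t,r)=e^{-2nmt}(\cos r)^{2m}\varphi_m(t,r)$ does not eliminate the zero-order term. Carrying out the computation, with $u=(\cos r)^{2m}$ one finds the $\varphi_m$-coefficient on the right-hand side equals
\[
-2m+(4m^2-2m)\tan^2 r-2m(4n-1)+6m\tan^2 r-4m(m+1)\tan^2 r=-8nm,
\]
so the correct exponential factor is $e^{-8nmt}$, exactly as your own heuristic $m\rightsquigarrow 2m$ (applied to the $e^{-4nmt}$ of the $\bS^{4n+3}$ argument) predicts. The paper's proof uses $e^{-8nmt}$; the $2nm$ appearing in the displayed eigenvalue of the stated proposition is a typo for $8nm$. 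With that correction your outline matches the paper line for line.
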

\begin{proof}
From \eqref{L-CP} we notice that $\frac{\partial^2}{\partial \phi^2}+2\cot 2\phi\frac{\partial}{\partial \phi}$ is the Laplacian on $\mathbb{CP}^1$.  It is known that the eigenfunction associated to the eigenvalue $-4m(m+1)$ is given by $P_m^{0,0}(\cos2\phi)$ where
\[
P_m^{0,0}(x)=\frac{(-1)^m}{2^m m!}\frac{d^m}{dx^m}(1-x^2)^{m},
\]
thus we can write the spectral decomposition of $h_t(r,\phi)$ as follows:
\[
h_t(r,\phi)=\sum_{m=0}^{+\infty}P_m^{0,0}(\cos2\phi)\phi_m(t,r).
\]
Plug it into the heat equation $\frac{\partial h_t}{\partial t}=\mathcal{L}h_t$, we obtain that
\[
\frac{\partial\phi_m}{\partial t}=\frac{\partial^2\phi_m}{\partial r^2}+\left((4n-1)\cot r-3\tan r \right)\frac{\partial\phi_m}{\partial r}-4m(m+1)\tan^2r\phi_m.
\]
Furthermore we consider the decomposition $\phi_m(t,r)=e^{-8nmt}(\cos r)^{2m}\varphi_m(t,r)$ where $\varphi_m(t,r)$ satisfies 
\[
\frac{\partial\varphi _m}{\partial t}=4\frac{\partial^2\varphi_m}{\partial r^2}+[(4n-1)\cot r-(4m+3)\tan r]\frac{\partial\varphi_m}{\partial r},
\]
and let $g_m(t,\cos 2r)=\varphi_m(t,r)$, then $g_m(t,x)$ is such that
\[
\frac{\partial g_m}{\partial t}=4(1-x^2)\frac{\partial^2 g_m}{\partial x^2}+4[(2m+2-2n)-(2n+2m+2)x]\frac{\partial g_m}{\partial x}.
\]
We denote 
\[
\Psi_m=(1-x^2)\frac{\partial^2 }{\partial x^2}+4[(2m+2-2n)-(2n+2m+2)x]\frac{\partial}{\partial x},
\]
then
\[
\frac{\partial g_m}{\partial t}=4\Psi_m(g_m).
\]
Moreover, for any $k\geq 0$, $$\Psi_m(g_m)+k(k+2n+2m+1)g_m=0$$  is the Jacobi differential equation whose eigenvector corresponding to the eigenvalue $-k(k+2n+2m+1)$  is given by 
\[
P_k^{2n-1,2m+1}(x)=\frac{(-1)^k}{2^kk!(1-x)^{2n-1}(1+x)^{2m+1}}\frac{d^k}{dx^k}\left((1-x)^{2n-1+k}(1+x)^{2m+1+k} \right).
\] 
Thus the spectral decomposition of $h_t(r,\phi)$ has the following form:
\[
h_t(r,\phi)=\sum_{m=0}^{+\infty}\sum_{k=0}^{\infty}\sigma_{k,m}e^{-[4k(k+2n+2m+1)+2nm]t}(\cos r)^{2m}P_m^{0,0}(\cos2\phi)P_k^{2n-1,2m+1}(\cos 2r),
\]
and by plugging in the initial condition, one can determine that  $\sigma_{k,m}=\frac{\Gamma(2n)}{4\pi^{2n+2}}(2k+2m+2n+1)(2m+1){k+2m+2n\choose 2n-1}$.
\end{proof}
We now compare the spectral decompositions of $h_t$ and $p_t$ and obtain the relations between $h_t$ and $p_t$ as follows.
\begin{proposition}
For $t>0$, $r\in[0,\frac{\pi}{2})$, $ \phi\in[0,\pi]$,
\begin{equation}\label{htpt}
h_t(r,\cos2\phi)=\frac{1}{2\pi}\int_0^\pi p_t(r,\cos\phi\cos\theta)d\theta
\end{equation}
where $p_t$ is the subelliptic heat kernel of the sub-Laplacian $L$ on $\BS$.
\end{proposition}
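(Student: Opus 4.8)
The plan is to prove \eqref{htpt} by comparing term-by-term the two Minakshisundaram--Pleijel expansions: the one for $p_t(r,\eta)$ on $\bS^{4n+3}$ and the one for $h_t(r,\phi)$ on $\mathbb{CP}^{2n+1}$ obtained just above. First I would recall that, along the Hopf fibration $\mathbb{S}^1\to\mathbf{SU}(2)\to\mathbb{CP}^1$ used to pass from $\tilde L$ to $\tilde{\mathcal L}$ in \eqref{L-CP}, the $\mathbf{SU}(2)$-distance $\eta$, the $\mathbb{CP}^1$-distance $\phi$ and the fiber angle $\theta$ are linked by $\cos\eta=\cos\phi\cos\theta$ --- this is precisely the coordinate change underlying the identity $\tilde{\Delta}_{\mathbf{SU}(2)}=\frac{\partial^2}{\partial\phi^2}+2\cot2\phi\frac{\partial}{\partial\phi}+(1+\tan^2\phi)\frac{\partial^2}{\partial\theta^2}$ borrowed from \cite{BB}. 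Since $\frac{\sin(m+1)\eta}{\sin\eta}=U_m(\cos\eta)$ is the $m$-th Chebyshev polynomial of the second kind, substituting $\cos\eta=\cos\phi\cos\theta$ turns the $(k,m)$-term of the expansion of $p_t$ into $\alpha_{k,m}\,e^{-4[k(k+2n+m+1)+nm]t}(\cos r)^m P_k^{2n-1,m+1}(\cos2r)$ times the polynomial $U_m(\cos\phi\cos\theta)$.

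The key step is then the special-function identity
\[
\frac1\pi\int_0^\pi U_m(\cos\phi\cos\theta)\,d\theta
=\begin{cases}0 & m\ \text{odd},\\ P^{0,0}_{m/2}(\cos2\phi) & m\ \text{even}.\end{cases}
\]
I would prove this by a spectral argument. The function $U_m(\cos\eta)$, expressed in the $(\phi,\theta)$ coordinates via $\cos\eta=\cos\phi\cos\theta$, is annihilated by $\frac{\partial^2}{\partial\phi^2}+2\cot2\phi\frac{\partial}{\partial\phi}+(1+\tan^2\phi)\frac{\partial^2}{\partial\theta^2}+m(m+2)$; integrating this PDE over the fiber variable $\theta\in[0,\pi]$ kills the $\partial_\theta^2$ term (its $\theta$-antiderivative vanishes at $\theta=0,\pi$ because of the $\sin\theta$ factor) and commutes with the rest, whose coefficients do not depend on $\theta$. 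Hence $\int_0^\pi U_m(\cos\phi\cos\theta)\,d\theta$ is an eigenfunction of the $\mathbb{CP}^1$-Laplacian $\frac{\partial^2}{\partial\phi^2}+2\cot2\phi\frac{\partial}{\partial\phi}$ for the eigenvalue $-m(m+2)$. Since the spectrum of the latter is $\{-4j(j+1):j\ge 0\}$ with (normalized) eigenfunctions $P^{0,0}_j(\cos2\phi)$, the average must vanish unless $m=2j$ is even, in which case it is a scalar multiple of $P^{0,0}_j(\cos2\phi)$; putting $\phi=0$ and using $\frac1\pi\int_0^\pi U_{2j}(\cos\theta)\,d\theta=1$ together with $P^{0,0}_j(1)=1$ pins the scalar down to $1$.

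With the identity in hand, I would integrate the expansion of $p_t(r,\cos\phi\cos\theta)$ in $\theta$ over $[0,\pi]$ term by term --- the interchange of sum and integral being justified, exactly as in the proof that the series represents the heat kernel, by the super-polynomial decay of $e^{-4[k(k+2n+m+1)+nm]t}$ for fixed $t>0$ against the polynomial bounds on $\alpha_{k,m}$, $|P_k^{2n-1,m+1}|$ on $[-1,1]$ and $|U_m|\le m+1$. Only the even indices $m=2j$ survive, leaving $\sum_{j,k}\tfrac12\alpha_{k,2j}\,e^{-4[k(k+2n+2j+1)+2nj]t}(\cos r)^{2j}P_k^{2n-1,2j+1}(\cos2r)\,P^{0,0}_j(\cos2\phi)$. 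It then remains to match this with the expansion of $h_t$: one checks directly from the explicit formulas that $\tfrac12\alpha_{k,2j}=\sigma_{k,j}$ and that the exponent $4[k(k+2n+2j+1)+2nj]$ is exactly the one in the $h_t$-expansion; uniqueness of the spectral expansion then gives \eqref{htpt}. (The factor $\tfrac1{2\pi}$ rather than $\tfrac1\pi$ is the arithmetic shadow of the relative normalization $\sigma_{k,j}=\tfrac12\alpha_{k,2j}$ of the two kernels.) The main obstacle is the special-function identity displayed above; once that is established, the rest is either recalled from the previous sections or routine bookkeeping.
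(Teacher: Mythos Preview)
Your proposal is correct and follows essentially the same route as the paper: both compare the spectral expansions of $p_t$ and $h_t$ via the key identity $\frac{1}{\pi}\int_0^\pi U_{2j}(\cos\phi\cos\theta)\,d\theta=P_j^{0,0}(\cos2\phi)$, with vanishing for odd indices. The only difference is that the paper simply cites this identity from \cite{F} and disposes of the odd terms by a parity remark, whereas you supply a self-contained spectral proof (fiberwise averaging sends $\mathbf{SU}(2)$-eigenfunctions to $\mathbb{CP}^1$-eigenfunctions, forcing the odd case to zero since $-m(m+2)$ misses the $\mathbb{CP}^1$-spectrum); this is a nice touch but not a genuinely different strategy.
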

\begin{proof} 
Let us use the expression of $p_t$ in \eqref{pt} and recall (see \cite{F}) the fact that
\[
P_m^{0,0}(\cos2\phi)=\frac{1}{\pi}\int_0^\pi U_{2m+1}(\cos\phi\cos\theta)d\theta
\]
where $U_{2m+1}$ is the Chebyshev polynomial
\[
U_{2m+1}(\cos\phi)=\frac{\sin (2m+1)\phi}{\sin\phi}.
\]
Since $\cos\eta=\cos\phi\cos\theta$, we have that
\[
h_t(r,\phi)=\sum_{m=0}^{+\infty}\sum_{k=0}^{\infty}\sigma_{k,m}e^{-[4k(k+2n+2m+1)+2nm]t}(\cos r)^{2m}\left(\frac{1}{\pi}\int_0^\pi U_{2m+1}(\cos\phi\cos\theta)d\theta\right) P_k^{2n-1,2m+1}(\cos 2r),
\]
which implies the desired relation between the subelliptic heat kernels. The even terms on the right hand side of \eqref{htpt} vanish due to the fact that
\[
p_t(r+\pi,\eta)=p_t(r,\eta).
\]
\end{proof}
Immediately from \eqref{htpt}, we can  deduce the integral representation of the subelliptic heat kernel on $\mathbb{CP}^{2n+1}$:
\begin{theorem}
For $t>0$, $r\in[0,\pi/2)$, $ \phi\in[0,\pi]$, 
\begin{equation}\label{kernel-cpn}
h_t(r,\cos2\phi)= \frac{e^{-t}}{2\pi\sqrt{\pi t}}\int_0^\pi \int_0^{+\infty} \frac{ \sinh y \sin \left(  \frac{ y\arccos(\cos \phi\cos\theta)}{2t}\right) }{\sqrt{1-\cos^2\phi\cos^2\theta}} e^{-\frac{y^2-\arccos(\cos \phi\cos\theta)^2}{4t}} q_t( \cos r\cosh y ) dyd\theta
\end{equation}
where $q_t$ is the Riemannian heat kernel on $S^{4n+3}$.
\end{theorem}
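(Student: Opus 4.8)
\emph{Proof proposal.} The plan is simply to combine the two facts already established: the intertwining relation \eqref{htpt}, which expresses $h_t$ as an average of $p_t$ over the extra fiber direction, and the integral representation \eqref{pt-int} of $p_t$ from Proposition \ref{prop-pt}. In the notation of \eqref{htpt} the second argument of $p_t$ is the fiber distance $\eta$ whose cosine equals $\cos\phi\cos\theta$, i.e.\ $\eta=\arccos(\cos\phi\cos\theta)\in[0,\pi]$, so that $\sin\eta=\sqrt{1-\cos^2\phi\cos^2\theta}\ge 0$. Specializing \eqref{pt-int} to this value of $\eta$ gives, for each fixed $\theta$,
\[
p_t\!\left(r,\arccos(\cos\phi\cos\theta)\right)=\frac{e^{-t}}{\sqrt{\pi t}}\int_0^{+\infty}\frac{\sinh y\,\sin\!\left(\frac{y\arccos(\cos\phi\cos\theta)}{2t}\right)}{\sqrt{1-\cos^2\phi\cos^2\theta}}\,e^{-\frac{y^2-\arccos(\cos\phi\cos\theta)^2}{4t}}\,q_t(\cos r\cosh y)\,dy .
\]

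Next I would insert this identity into the right-hand side of \eqref{htpt}. The scalar prefactor produced is $\tfrac{1}{2\pi}\cdot\tfrac{e^{-t}}{\sqrt{\pi t}}=\tfrac{e^{-t}}{2\pi\sqrt{\pi t}}$, which is exactly the constant appearing in \eqref{kernel-cpn}, and the remaining integrand in the variables $(\theta,y)\in[0,\pi]\times[0,+\infty)$ coincides termwise with the one in \eqref{kernel-cpn}. Thus the claimed formula follows at once, \emph{provided} the order of the $\theta$- and $y$-integrations may be exchanged.

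The only genuine point to verify is therefore this interchange, and it is mild: by Fubini's theorem it suffices to note that the inner $y$-integral converges absolutely, uniformly in $\theta\in[0,\pi]$. Indeed, for $r\in(0,\pi/2)$ one has $\cosh^{-1}(\cos r\cosh y)\le y$ (the bound used in the proof of Proposition \ref{propr0}), so the Gaussian weight $e^{-y^2/4t}$ dominates the at-most-Gaussian growth of $q_t(\cos r\cosh y)$ as $y\to+\infty$, while all the other factors ($\sinh y\,\sin(\cdot)$, the bounded ratio $\arccos(\cos\phi\cos\theta)^2$-dependent terms, and $1/\sqrt{1-\cos^2\phi\cos^2\theta}$, which is integrable in $\theta$) are harmless; the case $r=0$ is the already-treated on-diagonal situation. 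Hence the exchange is legitimate and \eqref{kernel-cpn} is proved. I expect no real obstacle here — the algebraic content is an immediate substitution, and the analytic content is the same integrability bookkeeping that already underlies Proposition \ref{prop-pt}. (Alternatively, one could substitute the integral representation of $p_t^{CR}$ on $\bS^{4n+1}$ together with the differentiation identity \eqref{pt-qt}, exactly as in the Remark following Proposition \ref{prop-pt}, but the route through \eqref{htpt} and \eqref{pt-int} is the most direct.)
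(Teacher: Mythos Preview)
Your proposal is correct and follows exactly the route the paper intends: the theorem is stated as an immediate consequence of the intertwining relation \eqref{htpt} and the integral representation \eqref{pt-int}, and that is precisely the substitution you carry out (with the identification $\eta=\arccos(\cos\phi\cos\theta)$, $\sin\eta=\sqrt{1-\cos^2\phi\cos^2\theta}$). The paper gives no further details, so your Fubini justification is in fact more than what appears there.
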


\subsection{Small time asymptotics of the subelliptic kernel on $\cp^{2n+1}$}

With \eqref{kernel-cpn} we now study the small time behavior of the subelliptic kernel $h_t$. 

\begin{proposition}
When $t\to0$:
\[
h_t(0,0)\sim_{t\to0}\frac{1}{(2n-1)2^{4n+4}\pi^{2n}t^{4n+2}}.
\] 
\end{proposition}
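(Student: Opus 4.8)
The plan is to extract the small-time behavior of $h_t(0,0)$ directly from the integral representation \eqref{kernel-cpn}, by setting $r=0$ and $\phi=0$ and then performing a Laplace-type analysis in the two variables $(y,\theta)$. When $r=\phi=0$ the formula becomes
\[
h_t(0,0)=\frac{e^{-t}}{2\pi\sqrt{\pi t}}\int_0^\pi\int_0^{+\infty}\frac{\sinh y\,\sin\!\left(\frac{y\theta}{2t}\right)}{\sin\theta}\,e^{-\frac{y^2-\theta^2}{4t}}\,q_t(\cosh y)\,dy\,d\theta,
\]
using $\arccos(\cos\theta)=\theta$ for $\theta\in[0,\pi]$. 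First I would substitute the known small-time Riemannian asymptotics \eqref{eq9} for $q_t(\cosh y)$, namely $q_t(\cosh y)\sim (4\pi t)^{-(2n+3/2)}\,(y/\sinh y)^{2n+1}e^{y^2/4t}$ (note the sign flip in the exponent since $\delta=iy$), which cancels the $e^{-y^2/4t}$ factor coming from the kernel of $\tilde\Delta_{\mathbf{SL}(2)}$. After this cancellation the $y$-integral no longer has a Gaussian decay from the exponential, so the decay must come from writing $\sin(y\theta/2t)\,e^{\theta^2/4t}$ and recognizing the combined phase; more precisely I would combine the oscillatory factor with $e^{\theta^2/4t}$ and complete the analysis by either (a) comparing with the already-established asymptotics of $p_t(0,\eta)$ in Proposition \ref{asy-cut} together with the averaging identity \eqref{htpt}, or (b) a direct stationary-phase/Laplace evaluation.

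Route (a) is the cleaner one and is what I would actually carry out. From \eqref{htpt} with $r=0$, $\phi=0$ we get
\[
h_t(0,0)=\frac{1}{2\pi}\int_0^\pi p_t(0,\cos\theta)\,d\theta=\frac{1}{2\pi}\int_0^\pi p_t(0,\eta(\theta))\,d\theta,
\]
where $\eta(\theta)=\arccos(\cos\theta)=\theta$ on $[0,\pi]$, so in fact $h_t(0,0)=\frac{1}{2\pi}\int_0^\pi p_t(0,\theta)\,d\theta$. Now I would plug in the small-time estimate of $p_t(0,\theta)$ from Proposition \ref{asy-cut}:
\[
p_t(0,\theta)\sim\frac{1}{4\pi\sin\theta\,2^{6n}t^{4n+1}(2n-1)!}\,(\pi-\theta)\,\theta^{2n-1}\,e^{-\frac{2\pi\theta-\theta^2}{4t}}.
\]
The exponent $2\pi\theta-\theta^2=\pi^2-(\theta-\pi)^2$ is maximized (so the exponential is largest) at $\theta=\pi$, where $e^{-(2\pi\theta-\theta^2)/4t}=e^{-\pi^2/4t}\cdot e^{(\theta-\pi)^2/4t}$... wait, that grows — so actually the exponent $-(2\pi\theta-\theta^2)/4t = -(\pi^2-(\theta-\pi)^2)/4t$ is most negative, i.e. the integrand decays fastest, at $\theta=\pi$, and is largest at the endpoints of $[0,\pi]$; but $p_t(0,\theta)$ blows up and the prefactor $(\pi-\theta)/\sin\theta\to 1$ as $\theta\to\pi$ while $\theta^{2n-1}\to\pi^{2n-1}$. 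The dominant contribution to $\int_0^\pi p_t(0,\theta)d\theta$ as $t\to0$ comes from the neighborhood of $\theta=\pi$ where the exponent $-(2\pi\theta-\theta^2)/4t$ achieves its maximum value $-\pi^2/4t$ over $[0,\pi]$ — I need to double-check: $\partial_\theta(2\pi\theta-\theta^2)=2\pi-2\theta>0$ on $(0,\pi)$, so $2\pi\theta-\theta^2$ is increasing, hence $-(2\pi\theta-\theta^2)/4t$ is decreasing, so the integrand's exponential factor is largest at $\theta=0$. But near $\theta=0$ the prefactor has $(\pi-\theta)\theta^{2n-1}/\sin\theta\sim\pi\theta^{2n-2}$, which is integrable. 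So I would do a Laplace/Watson-lemma analysis at $\theta=0$: near $\theta=0$, $e^{-(2\pi\theta-\theta^2)/4t}\sim e^{-\pi\theta/2t}$, and $\int_0^\infty \pi\,\theta^{2n-2}e^{-\pi\theta/2t}\,d\theta=\pi\,\Gamma(2n-1)\,(2t/\pi)^{2n-1}$.

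Carrying this out: $h_t(0,0)\sim\frac{1}{2\pi}\cdot\frac{1}{4\pi\,2^{6n}t^{4n+1}(2n-1)!}\cdot\pi\cdot\Gamma(2n-1)\cdot(2t/\pi)^{2n-1}$, and since $\Gamma(2n-1)=(2n-2)!=(2n-1)!/(2n-1)$, the $t$-power is $t^{4n+1}\cdot t^{-(2n-1)}\cdot t^{?}$ — wait, $t^{-(4n+1)}\cdot t^{2n-1}=t^{-(2n+2)}$, matching the claimed $t^{-(4n+2)}\cdot t^{?}$... hmm the claim has $t^{-(4n+2)}$, not $t^{-(2n+2)}$, so I must be misreading a power somewhere and would need to recheck the $t$-scaling in Proposition \ref{asy-cut} (likely it is $t^{4n+1}$ in the statement but the true leading power after the $1+O(t)$ is different, or the CR input has a different normalization); regardless, the \emph{method} is: reduce to $h_t(0,0)=\frac1{2\pi}\int_0^\pi p_t(0,\theta)\,d\theta$, insert the cut-locus asymptotics of $p_t(0,\theta)$, and evaluate the resulting integral by Laplace's method concentrated at $\theta=0$ (or wherever the exponent is extremal on $[0,\pi]$), using Watson's lemma with the expansion $e^{-(2\pi\theta-\theta^2)/4t}\approx e^{-\pi\theta/2t}$ and the local behavior $(\pi-\theta)\theta^{2n-1}/\sin\theta\approx\pi\theta^{2n-2}$. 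The main obstacle I anticipate is bookkeeping the exact constant: correctly tracking all powers of $2$, $\pi$, and $t$ through the substitution and the Gamma-function evaluation, and making sure the $O(t)$ error terms in both \eqref{eq9}/\ref{asy-cut} and in Watson's lemma are genuinely lower order so that the stated leading asymptotic $\frac{1}{(2n-1)2^{4n+4}\pi^{2n}t^{4n+2}}$ emerges cleanly. An alternative self-contained route, bypassing Proposition \ref{asy-cut}, is to go straight to \eqref{kernel-cpn} with $r=\phi=0$, insert \eqref{eq9}, and run a two-dimensional steepest-descent argument in $(y,\theta)$ as in the proof of Proposition \ref{asym}; the critical point analysis there would pin the concentration locus and yield the same constant, but this is more laborious, so I would present route (a) and remark that (b) gives a check.
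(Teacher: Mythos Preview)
Your route (a) is exactly the paper's approach. The paper writes $h_t(0,0)$ via the integral representation, evaluates the inner $y$-integral by a residue computation (this is precisely the calculation behind Proposition \ref{asy-cut}, so citing that proposition as you do is equivalent), obtains
\[
h_t(0,0)\sim\frac{1}{2\pi(2n-1)!\,2^{6n+2}\,t^{4n+1}}\int_0^\pi\frac{\theta^{2n-1}}{\sin\theta}\,e^{\frac{\theta^2-2\pi\theta}{4t}}\,d\theta,
\]
and then applies Watson's lemma at the endpoint $\theta=0$ just as you describe: $\theta^{2n-1}/\sin\theta\sim\theta^{2n-2}$ and $e^{(\theta^2-2\pi\theta)/4t}\sim e^{-\pi\theta/(2t)}$, giving $\int_0^\varepsilon \theta^{2n-2}e^{-\pi\theta/(2t)}\,d\theta\sim\Gamma(2n-1)(2t/\pi)^{2n-1}$.

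Your arithmetic is in fact correct, and your confusion about the $t$-power is warranted: following either your computation or the paper's own proof line by line yields $t^{-(2n+2)}$, not $t^{-(4n+2)}$; the exponent $4n+2$ in the displayed statement appears to be a typo for $2n+2$. The constant $\dfrac{1}{(2n-1)2^{4n+4}\pi^{2n}}$ is what both computations produce. One small point neither you nor the paper makes explicit: Proposition \ref{asy-cut} is stated pointwise in $\theta\in(0,\pi)$, so to integrate it against $d\theta$ and pass to the limit you need some uniformity (or a direct bound) near $\theta=0$; in practice this is harmless because the residue computation gives the intermediate $\theta$-integral directly, but it is worth a sentence if you write it up.
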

\begin{proof}
From \eqref{htpt} we know that
\[
h_t(0, 0)=\frac{e^{-t}}{2\pi\sqrt{4\pi t}}\int_0^{\pi} \int_{-\infty}^{+\infty} \frac{ \sinh y \sin \left(  \frac{\theta y}{2t}\right) }{\sin \theta} e^{-\frac{y^2-\theta^2}{4t}} q_t^S( \cosh y ) dy,
\]
By applying the residue theorem we can easily deduce that
\[
h_t(0, 0)\sim\frac{1}{2\pi(2n-1)!2^{6n+2}t^{4n+1}}\int_0^\pi\frac{\theta^{2n-1}}{\sin\theta}e^{\frac{\theta^2-2\pi \theta}{4t}}d\theta.
\]
Notice that for $\epsilon>0$ small enough,
\[
\int_0^\pi\frac{\theta^{2n-1}}{\sin\theta}e^{\frac{\theta^2-2\pi \theta}{4t}}d\theta\sim
\int_0^\epsilon\theta^{2n-2}e^{\frac{-2\pi \theta}{4t}}d\theta,
\] where the last term has the following estimates:
\[
\int_0^\epsilon\theta^{2n-2}e^{\frac{-2\pi \theta}{4t}}d\theta\sim\Gamma(2n-1)\left(\frac{2t}{\pi}\right)^{2n-1}.
\]
Therefore we have the conclusion.
\end{proof}

\begin{proposition}
On the vertical cut-locus, i.e. for any $(0,\phi)$, $\phi\in(0,\pi)$, 
\[
h_t(0,\phi)\sim_{t\to0} \frac{1}{2\pi(2n-1)!2^{6n+2}t^{4n+\frac{1}{2}}}e^{\frac{\phi^2-2\pi\phi}{4t}}\frac{\phi^{2n-1}}{\sqrt{\sin 2\phi}} \sqrt{\frac{2\pi }{\pi-\phi}}.
\]
\end{proposition}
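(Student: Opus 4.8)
The plan is to start from the intertwining identity \eqref{htpt} specialized to $r=0$. Writing $\alpha(\theta):=\arccos(\cos\phi\cos\theta)$, it reads
\[
h_t(0,\cos 2\phi)=\frac{1}{2\pi}\int_0^\pi p_t\bigl(0,\alpha(\theta)\bigr)\,d\theta .
\]
For $\phi\in(0,\pi/2)$ the map $\theta\mapsto\alpha(\theta)$ is monotone on $[0,\pi]$ with range the compact subinterval $[\phi,\pi-\phi]\subset(0,\pi)$, so the vertical cut-locus asymptotics of $p_t(0,\cdot)$ from Proposition~\ref{asy-cut} can be inserted with a remainder that is uniformly $1+O(t)$. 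This reduces the problem to the small-time analysis of the one-dimensional integral
\[
I(t):=\int_0^\pi \frac{\bigl(\pi-\alpha(\theta)\bigr)\,\alpha(\theta)^{2n-1}}{\sin\alpha(\theta)}\;e^{-\frac{2\pi\alpha(\theta)-\alpha(\theta)^2}{4t}}\,d\theta ,
\]
with $h_t(0,\cos 2\phi)=\dfrac{1+O(t)}{8\pi^2\,2^{6n}(2n-1)!\,t^{4n+1}}\,I(t)$.

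The next step is Laplace's method for $I(t)$. The phase is $\Phi(\theta):=2\pi\alpha(\theta)-\alpha(\theta)^2=\alpha(\theta)\bigl(2\pi-\alpha(\theta)\bigr)$; since $s\mapsto s(2\pi-s)$ increases on $[0,\pi]$ and $\alpha$ is monotone, $\Phi$ attains its minimum at the \emph{endpoint} $\theta=0$, where $\alpha(0)=\phi$ and $\Phi(0)=2\pi\phi-\phi^2$. The key observation is that $\theta=0$ is nonetheless a non-degenerate critical point of $\Phi$: differentiating $\cos\alpha=\cos\phi\cos\theta$ once gives $\alpha'(0)=0$, and a second differentiation gives $\alpha''(0)=\cot\phi$, hence $\Phi'(0)=0$ and $\Phi''(0)=2(\pi-\phi)\cot\phi>0$. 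Because the minimum sits at an endpoint of the interval, only half of the Gaussian mass enters, so
\[
I(t)\ \sim\ \frac{\pi-\phi}{\sin\phi}\,\phi^{2n-1}\,e^{-\frac{2\pi\phi-\phi^2}{4t}}\cdot\tfrac12\sqrt{\frac{4\pi t}{\Phi''(0)}}
=\frac{\pi-\phi}{\sin\phi}\,\phi^{2n-1}\,e^{-\frac{2\pi\phi-\phi^2}{4t}}\sqrt{\frac{\pi t}{(\pi-\phi)\cot\phi}} .
\]

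Feeding this back into the formula for $h_t$, using $\sin\phi\cos\phi=\tfrac12\sin 2\phi$ to simplify $\dfrac{1}{\sin\phi\sqrt{\cot\phi}}=\dfrac{\sqrt2}{\sqrt{\sin 2\phi}}$, and regrouping the powers of $t$ (the $t^{-(4n+1)}$ from the prefactor and the $t^{1/2}$ from the Gaussian combine to $t^{-(4n+1/2)}$), yields an asymptotics of $h_t(0,\phi)$ of the announced form, with exponential factor $e^{(\phi^2-2\pi\phi)/4t}$ and geometric coefficient built from $\phi^{2n-1}$, $\pi-\phi$ and $\sin 2\phi$. The delicate point — and what I regard as the main obstacle — is the rigorous justification of this endpoint-critical-point Laplace step: one must exploit the uniformity, for $\alpha\in[\phi,\pi-\phi]$, of the remainder in Proposition~\ref{asy-cut}; verify that the rest of the $\theta$-range contributes exponentially less, in particular that the competing endpoint $\theta=\pi$ (where $\Phi(\pi)=\pi^2-\phi^2>2\pi\phi-\phi^2$ for $\phi<\pi/2$) is negligible; and carry through the constant bookkeeping carefully. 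A route bypassing Proposition~\ref{asy-cut} altogether is to start instead from \eqref{kernel-cpn} at $r=0$, evaluate the inner $y$-integral by the residue calculus exactly as in the proof of the diagonal estimate $h_t(0,0)$, and then perform the same Laplace analysis in $\theta$; in that case one additionally needs the residue estimate to hold uniformly in $\theta$.
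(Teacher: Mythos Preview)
Your approach is essentially the same as the paper's: both start from the intertwining \eqref{htpt} (equivalently \eqref{kernel-cpn}) at $r=0$, insert the vertical cut-locus asymptotics of $p_t(0,\cdot)$ from Proposition~\ref{asy-cut}, and then carry out a one-dimensional Laplace estimate. The only cosmetic difference is in how the Laplace step is organized. You stay in the variable $\theta$ and recognize $\theta=0$ as an \emph{endpoint which is simultaneously a critical point} of the phase (since $\alpha'(0)=0$, $\alpha''(0)=\cot\phi$), picking up a half-Gaussian contribution. The paper instead first changes variables to $u=\alpha(\theta)$, whose Jacobian $d\theta=\dfrac{\sin u}{\sqrt{\cos^2\phi-\cos^2 u}}\,du$ produces an integrable $1/\sqrt{u-\phi}$ singularity at the left endpoint $u=\phi$; the Laplace step then becomes the elementary evaluation $\int_0^\infty x^{-1/2}e^{-ax}\,dx=\sqrt{\pi/a}$. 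These two computations are of course equivalent (the substitution $u-\phi\approx\tfrac12\cot\phi\,\theta^2$ converts one into the other), and your observation that the other endpoint $\theta=\pi$ is exponentially negligible because $\Phi(\pi)>\Phi(0)$ is exactly the mechanism the paper relies on implicitly.

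One remark on constants: you correctly retain the factor $(\pi-\alpha)$ coming from Proposition~\ref{asy-cut}, whereas the paper's displayed intermediate integral drops it (and also writes the $u$-range as $[\phi,\phi+\pi]$ rather than $[\phi,\pi-\phi]$). This is why your final constant differs from the stated one by a factor $(\pi-\phi)/\pi$; your bookkeeping is the more accurate one, and the discrepancy is a slip in the paper rather than in your argument.
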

\begin{proof}
By \eqref{kernel-cpn} we have that
\[
h_t(0,\cos2\phi)= \frac{e^{-t}}{2\pi\sqrt{\pi t}}\int_0^\pi \int_0^{+\infty} \frac{ \sinh y \sin \left(  \frac{ y\arccos(\cos \phi\cos\theta)}{2t}\right) }{\sqrt{1-\cos^2\phi\cos^2\theta}} e^{-\frac{y^2-\arccos(\cos \phi\cos\theta)^2}{4t}} q_t( \cosh y ) dyd\theta.
\]
Apply the same technique as in Proposition \ref{asy-cut}, we then obtain that
\[
h_t(0,\phi)\sim_{t\to0} \frac{1}{2\pi}\int_0^\pi \frac{u^{2n-1}}{(2n-1)!2^{6n+2}t^{4n+1}\sin u}e^{\frac{u^2-2\pi u}{4t}}d\theta,
\] 
where $u=\arccos(\cos\phi\cos\theta)$. Thus by changing variables we have that  for all $\phi\in(0,\pi)$,
\[
h_t(0,\phi)\sim_{t\to0} \frac{1}{2\pi(2n-1)!2^{6n+2}t^{4n+1}}\int_\phi^{\phi+\pi} \frac{u^{2n-1}}{\sqrt{\cos^2\phi-\cos^2 u}}e^{\frac{u^2-2\pi u}{4t}}du.
\]
%Apply Laplace method to the integral on the right hand side, one has
Since
\begin{eqnarray*}
\int_\phi^{\phi+\pi} \frac{u^{2n-1}}{\sqrt{\cos^2\phi-\cos^2 u}}e^{\frac{u^2-2\pi u}{4t}}du
&=&
\sqrt{2}\int_0^{\pi}\frac{(\phi+x)^{2n-1}}{\sqrt{\cos 2\phi-\cos 2(\phi+x)}}e^{\frac{(\phi+x)^2-2\pi(\phi+x)}{4t}}dx\\
&\sim& e^{\frac{\phi^2-2\pi\phi}{4t}}\int_0^\epsilon\frac{\phi^{2n-1}}{\sqrt{\sin 2\phi}\sqrt{x}}e^{-\frac{(\pi-\phi)x}{2t}}dx,
\end{eqnarray*}
By Laplace method we obtain 
\[
\int_0^\epsilon\frac{1}{\sqrt{x}}e^{-\frac{(\pi-\phi)x}{2t}}dx\sim \sqrt{\frac{2\pi t}{\pi-\phi}},
\]
thus
\[
h_t(0,\phi)\sim_{t\to0} \frac{1}{2\pi(2n-1)!2^{6n+2}t^{4n+\frac{1}{2}}}e^{\frac{\phi^2-2\pi\phi}{4t}}\frac{\phi^{2n-1}}{\sqrt{\sin 2\phi}} \sqrt{\frac{2\pi }{\pi-\phi}}.
\]
\end{proof}
\begin{remark}
For the points outside of the cut-locus, the small time asymptotics  follows the result by Ben Arous in \cite{Ben1}: for $t>0$, $r,\in(0,\frac{\pi}{2})$, $\phi\in(0,\pi)$,
\[
h_t(r,\phi)\sim\frac{C(r,\phi)}{t^{2n+\frac{3}{2}}}e^{-\frac{d(r,\phi)^2}{4t}},
\]
where $d$ is the sub-Riemannian distance. Being more explicit about the distance $d$ seems to be quite difficult.
\end{remark}

\clearpage
\end{document}